\theoremstyle{plain}
\newtheorem{lemma}{Lemma}[section]
\newtheorem{theorem}[lemma]{Theorem}
\newtheorem{corollary}[lemma]{Corollary}
\newtheorem{proposition}[lemma]{Proposition}
\newtheorem{conjecture}[lemma]{Conjecture}
\theoremstyle{remark}
\newtheorem*{remark}{Remark}
\newtheorem{definition}[lemma]{Definition}
\newcommand{\N}{\mathbb{N}}
\newcommand{\Z}{\mathbb{Z}}
\newcommand{\QQ}{\mathbb{Q}}
\newcommand{\J}{\mathcal{J}}
\newcommand{\Y}{\mathbb{Y}}
\newcommand{\kumu}{\kappa}
\newcommand{\ev}{\text{ev}}
\newcommand{\odd}{\text{odd}}
\newcommand{\PPP}{\mathcal{P}}
\def\la{\lambda}
\def\ka{\kappa}
\def\a{\alpha}
\def\si{\sigma}
\def\r{r}
\DeclareMathOperator{\RHS}{RHS}
\DeclareMathOperator{\LHS}{LHS}
\DeclareMathOperator{\Der}{Der}
\DeclareMathOperator{\IE}{InEx}
\DeclareMathOperator{\supp}{supp}
\DeclareMathOperator{\Id}{Id}
\DeclareMathOperator{\Symm}{Sym}
\def\uu{\bm{u}}
\def\vv{\bm{v}}
\author[M.~Dołęga]{Maciej Dołęga}
\address{
Wydział Matematyki i Informatyki, 
Uniwersytet im.~Adama Mickiewicza, 
Collegium Mathematicum,
Umultowska 87, 
61-614 Poznań, 
Poland, \newline \indent Instytut Matematyczny,
Uniwersytet Wrocławski,  \mbox{pl.\ Grunwaldzki~2/4,} 50-384
Wrocław, Poland}
\email{maciej.dolega@amu.edu.pl}
 \thanks{
MD is supported from {\it NCN}, grant UMO-2015/16/S/ST1/00420.}
\keywords{Macdonald polynomials; Cumulants; $q,t$-Kostka numbers}
\subjclass[2010]{05E05}
\title[Strong factorization property of Macdonald polynomials]{Strong factorization property of Macdonald polynomials and
higher--order Macdonald's positivity conjecture}
\begin{document}

\maketitle

\begin{abstract}
We prove a strong factorization property of interpolation Macdonald
polynomials when $q$ tends to $1$. As a consequence, we show that
Macdonald polynomials have  a strong factorization property when $q$
tends to $1$, which was posed as an open question in our previous
paper with F\'eray. Furthermore, we introduce multivariate $q,t$-Kostka
numbers and we show that
they are polynomials in $q,t$ with integer coefficients by using
the strong factorization property of Macdonald polynomials. We conjecture that multivariate $q,t$-Kostka
numbers
are in fact polynomials in $q,t$ with nonnegative integer coefficients, which
generalizes the celebrated Macdonald's positivity conjecture.
\end{abstract}

\section{Introduction}
\label{sec:introduction}

\subsection{Macdonald polynomials}
\label{subsec:Macdonald}

In 1988 Macdonald \cite{Macdonald1988, Macdonald1995} introduced a new
family of symmetric functions $J_\la^{(q,t)}(\bm{x})$ depending upon
a partition $\la$, a set of variables $\bm{x} = \{x_1,\dots,x_N\}$ and two real parameters $q, t$. They were immediately
hailed as a breakthrough in symmetric function theory as well as special functions, as they contained most
of the previously studied families of symmetric functions such as
Schur polynomials, Jack polynomials, Hall–Littlewood polynomials and
Askey–Wilson polynomials as special cases. They also satisfied many exciting
properties, among which we just mention one, which led to a
remarkable relation between Macdonald polynomials, representation
theory, and algebraic geometry. This property, called \emph{Macdonald’s
postivity conjecture} \cite{Macdonald1988}, states that the coefficients
$K^{(q,t)}_{\mu,\la}$ in the expansion of $J_\la^{(q,t)}(\bm{x})$ into the ``plethystic
Schur'' basis $s_\mu[\bm{X}(1-t)]$ (for the readers not familiar with
the plethystic notation we refer to \cite[Chapter VI.8]{Macdonald1995}) are polynomials in $q,t$ with
nonnegative integer coefficients. Garsia and Haiman \cite{GarsiaHaiman1993} refined this conjecture,
giving a representation theoretic interpretation for the coefficients in terms of Garsia-Haiman modules, an
interpretation which was finally proved almost ten years later by
Haiman \cite{Haiman2001}, who connected the problem to the
study of the Hilbert scheme of $N$ points in the plane from algebraic
geometry. It quickly turned out that Macdonald
polynomials have found applications in special function theory, representation theory, algebraic geometry, 
group theory, statistics, quantum mechanics, and much more
\cite{GarsiaRemmel2005}. Moreover, their fascinating and rich combinatorial structure
is one of the most important object of interest in contemporary
algebraic combinatorics.

\subsection{Strong factorization property of interpolation Macdonald
  polynomials}
\label{subsec:SFP}

The main goal of this paper is to state and partially prove a
generalization of the celebrated Macdonald's positivity conjecture. We
are going to do it by proving that Macdonald polynomials have a \emph{strong
  factorization property} when $q \to 1$, which also resolves the
problem posed by the author of this paper and F\'eray in
our recent joint paper \cite[Conjecture 1.5]{DolegaFeray2016}. 

In order to explain the notion of \emph{strong factorization
  property}, let us introduce a few notations.
If $\lambda$ and $\mu$ are partitions,
we denote $\lambda \oplus \mu := (\la_1+\mu_1,\la_2+\mu_2,\dots)$ their entry-wise sum;
see \cref{SubsecPartitions}.
If $\lambda^1,\dots,\lambda^r$ are partitions
and $I$ a subset of $[r]:=\{1,\cdots,r\}$,
then we denote 
\[\la^I:= \bigoplus_{i \in I} \la^i.\]
Moreover, we use a standard notation:

\begin{definition}
\label{def:Osymbol}
For $r \in R$, where $R$ is a ring and $f,g\in R(q)$,
we write $f=O_r(g)$ if the rational function 
$\frac{f(q)}{g(q)}$ has no pole in $q = r$.
\end{definition}

Then, we prove the following theorem:

\begin{theorem}
    For any partitions $\lambda^1,\dots,\lambda^r$, Macdonald
    polynomials have the {\em strong factorization property} when $q \to 1$, {\it i.e.}
    \begin{equation}
        \prod_{I \subset [r]} \left(J_{\la^I}^{(q,t)}\right)^{(-1)^{|I|}}
        =1+O_1\left((q-1)^{r-1}\right).
        \label{EqSFMacdo}
    \end{equation}
    \label{ThmSFJack}
\end{theorem}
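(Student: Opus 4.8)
The plan is to deduce the factorization property for ordinary Macdonald polynomials from the corresponding property for interpolation Macdonald polynomials, which (per the abstract and section title) will have been established earlier in the paper. Recall that the interpolation Macdonald polynomial $I_\mu^{(q,t)}$ is the unique (up to normalization) symmetric polynomial of degree $|\mu|$ whose top-degree homogeneous part is $J_\mu^{(q,t)}$ and which vanishes at the nodes $q^{\lambda}t^{\delta}$ for all partitions $\lambda$ with $|\lambda|\le|\mu|$, $\lambda\ne\mu$. The key structural fact is that evaluating an interpolation Macdonald polynomial at a partition point $q^\lambda t^\delta$ behaves multiplicatively enough that a strong factorization statement for the evaluations $I_{\mu}^{(q,t)}(q^{\lambda}\cdots)$ can be transported to the leading terms, i.e.\ to $J_\mu^{(q,t)}$ itself. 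So the first step is to recall the precise interpolation analogue of \eqref{EqSFMacdo} proved earlier, say in the form
\[
\prod_{I\subset[r]}\left(I_{\la^I}^{(q,t)}\right)^{(-1)^{|I|}}=1+O_1\!\left((q-1)^{r-1}\right),
\]
valid in the appropriate extended polynomial ring.

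Second, I would pass from the interpolation polynomials to the ordinary ones by extracting top-degree components. Writing $I_\mu^{(q,t)} = J_\mu^{(q,t)} + (\text{lower order})$, one needs that the alternating product over $I\subset[r]$ of lower-order corrections does not spoil the $O_1((q-1)^{r-1})$ estimate when one isolates the homogeneous piece of top degree $\sum_i |\lambda^i|$ minus the degree drop; concretely, the degree of $J_{\la^I}$ is $|\la^I| = \sum_{i\in I}|\lambda^i|$, and since $\bigoplus$ is additive on sizes, $\sum_{I}(-1)^{|I|}|\la^I| = \sum_I (-1)^{|I|}\sum_{i\in I}|\lambda^i| = \sum_i |\lambda^i|\sum_{I\ni i}(-1)^{|I|} = 0$ for $r\ge 2$, so the product in \eqref{EqSFMacdo} is homogeneous of degree $0$ — good, this is consistent. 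The real content is that taking the top-degree part commutes with the alternating product modulo $(q-1)^{r-1}$: this should follow because each $I_\mu^{(q,t)}/J_\mu^{(q,t)} = 1 + (\text{strictly lower degree})/J_\mu^{(q,t)}$, and in the multiplicative telescoping these corrections either cancel or contribute at order $(q-1)^{r-1}$ by the same inclusion–exclusion bookkeeping that governs the interpolation statement.

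Third, I would handle the normalization carefully: \eqref{EqSFMacdo} is stated for the integral form $J_\lambda^{(q,t)}$, so one must track the normalization constant $c_\mu(q,t)$ relating $I_\mu^{(q,t)}$ to its monic version and to $J_\mu^{(q,t)}$, and check that $\prod_I c_{\la^I}^{(-1)^{|I|}} = 1 + O_1((q-1)^{r-1})$ as well — this is a purely combinatorial identity about products over cells/arms/legs of the Young diagrams $\la^I$, and it should reduce to the same kind of statement one already proved for the interpolation polynomials (indeed, constants like $\prod_{s\in\mu}(1-q^{a(s)+1}t^{l(s)})$ are themselves "factorizable" in this sense because arm/leg lengths are controlled linearly by the parts of $\mu$). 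Putting the three pieces together gives \eqref{EqSFMacdo}.

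The main obstacle I expect is the second step: making precise the claim that extracting the top-degree homogeneous component is compatible with the alternating product up to the required order in $q-1$. The subtlety is that $q\to 1$ and "lowest/highest degree in $\bm{x}$" interact — one is taking a limit in the parameter while simultaneously grading by polynomial degree, and the lower-order terms of $I_\mu^{(q,t)}$ have coefficients that are themselves rational in $q,t$ with their own behaviour at $q=1$. The cleanest route is probably to work in the extended algebra $\Polaext$ (or whatever ambient space the earlier interpolation result lives in), where "evaluation on partitions" is the natural coordinate system, prove the statement there at the level of these evaluation functionals using the already-established interpolation factorization, and only at the very end specialize/project down to recover a statement about the symmetric polynomials $J_{\la^I}^{(q,t)}$ themselves. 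If the earlier sections set up that dictionary — as the phrase "strong factorization property of interpolation Macdonald polynomials" in the section heading strongly suggests — then \cref{ThmSFJack} becomes essentially a corollary, with all the genuine work already done upstream.
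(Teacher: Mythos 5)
Your overall strategy --- deduce the statement from the corresponding property of interpolation Macdonald polynomials, using that their top-degree part is $J_\la^{(q,t)}$ --- is the right one, and it is what the paper does. But the step you yourself flag as ``the main obstacle'' (your second step) is a genuine gap, not a technicality: extracting the top-degree homogeneous component does not commute with forming the alternating \emph{product} $\prod_{I}\bigl(\J_{\la^I}^{(q,t)}\bigr)^{(-1)^{|I|}}$, and the hand-wave that the lower-order corrections ``either cancel or contribute at order $(q-1)^{r-1}$ by the same inclusion--exclusion bookkeeping'' is exactly the assertion that needs proof; as written the argument does not go through. The paper's resolution is to change the algebraic form in which the statement is transported: it works with the cumulant
\[
\kumu^{\J}(\la^1,\dots,\la^r)=\sum_{\pi\in\PPP([r])}(-1)^{\#\pi}\,(\#\pi-1)!\prod_{B\in\pi}\J^{(q,t)}_{\la^B},
\]
which is a \emph{linear} combination of products. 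Each product $\prod_{B\in\pi}\J_{\la^B}$ has degree $|\la^{[r]}|$ with top homogeneous part $\prod_{B\in\pi}J_{\la^B}$, so the degree-$|\la^{[r]}|$ component of $\kumu^{\J}$ is literally $\kumu^{J}$; since taking a homogeneous component is linear and compatible with expanding in powers of $(q-1)$, the bound $\kumu^{\J}=O_1\bigl((q-1)^{r-1}\bigr)$ (\cref{theo:CumulantsMacdonald}) transfers to $\kumu^{J}$ with no further work. This is precisely the step your multiplicative formulation cannot perform.

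The second missing ingredient is the passage from the cumulant (additive) statement back to the multiplicative one \cref{EqSFMacdo}. This is not automatic: it is the general equivalence of the ``small cumulant property'' and the ``strong factorization property'' (\cref{PropEquivalenceSCQF}, imported from the earlier Do{\l}\k{e}ga--F\'eray paper), and its hypothesis is that each $u_I=J^{(q,t)}_{\la^I}$ and its inverse are $O_1(1)$, i.e.\ that $J^{(1,t)}_{\la}$ has a non-zero limit at $q=1$ --- which is supplied by Macdonald's explicit formula for $J^{(1,t)}_\la$. Your proposal never invokes such a non-vanishing statement, yet without it the quotients in \cref{EqSFMacdo} need not even be $O_1(1)$. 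Finally, your third step (tracking a normalization constant $c_\mu(q,t)$) is moot in the paper's setup: the interpolation polynomial $\J_\la^{(q,t)}$ is normalized so that its top-degree part is exactly $J_\la^{(q,t)}$ (same coefficient of $m_\la$), so no correcting factor appears; the hook-product factorization you allude to does occur in the paper, but in a different role (controlling the leading monomial coefficient of the cumulant), not as a normalization fix here.
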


As in our previous paper \cite{DolegaFeray2016}, let us unpack the notation for small values of $r$ in order to explain the terminology {\em strong factorization property}.
\begin{itemize}
    \item For $r=2$, \cref{EqSFMacdo} writes as
        \[ \frac{J^{(q,t)}_{\la^1 \oplus \la^2}}{J^{(q,t)}_{\la^1}J^{(q,t)}_{\la^2}}
        = 1+O_1\left(q-1\right). \]
        In other terms, this means that for $q=1$,
        one has the factorization property 
        $J^{(1,t)}_{\la^1 \oplus \la^2}=J^{(1,t)}_{\la^1}J^{(1,t)}_{\la^2}$.
        This is indeed true and follows from an explicit expression
        for $J^{(1,t)}_{\la}$ given in \cite[Chapter VI, Remark (8.4)-(iii)]{Macdonald1995}. 
        Thus, in this case, our theorem does not give anything new.
    \item For $r=3$, \cref{EqSFMacdo} writes as
        \[\frac{J^{(q,t)}_{\la^1 \oplus \la^2 \oplus \la^3} \, J^{(q,t)}_{\la^1} \, J^{(q,t)}_{\la^2}
        J^{(q,t)}_{\la^3}}{J^{(q,t)}_{\la^1 \oplus \la^2} J^{(q,t)}_{\la^1 \oplus \la^3} J^{(q,t)}_{\la^2 \oplus \la^3}}
                = 1+O_1\left((q-1)^2\right). \]
        Using the case $r=2$, it is easily seen that the left-hand side is $1+O_1\left(q-1\right)$.
        But our theorem says more and asserts that it is $1+O_1\left((q-1)^2\right)$,
        which is not trivial at all.
\end{itemize}
\medskip

\cref{ThmSFJack} has an equivalent form that uses the notion of {\em
  cumulants of Macdonald polynomials} (see \cref{sec:cumulants} for comments on the terminology).
For partitions $\la^1,\cdots,\la^r$, we denote
\[\ka^J(\la^1,\cdots,\la^r) :=
   \sum_{\substack{\pi \in \PPP([r]) } } 
   (-1)^{\#\pi}\prod_{B \in \pi} J^{(q,t)}_{\la^B}.
\]
Here, the sum is taken over set partitions $\pi$ of $[r]$ and
$\#\pi$ denotes the number of parts of $\pi$;
see \cref{SubsecSetPartitions} for details. 
For example
\begin{align*}
   \ka^J(\la^1,\la^2) & = J^{(q,t)}_{\lambda^1\oplus\lambda^2}- 
                                      J^{(q,t)}_{\lambda^1}J^{(q,t)}_{\lambda^2}, \\ 
   \ka^J(\la^1,\la^2,\la^3) & = J^{(q,t)}_{\lambda^1\oplus\lambda^2\oplus\lambda^3}- 
                                      J^{(q,t)}_{\lambda^1}J^{(q,t)}_{\lambda^2\oplus\lambda^3}\\  & \qquad -
                                      J^{(q,t)}_{\lambda^2}J^{(q,t)}_{\lambda^1\oplus\lambda^3}-
                                      J^{(q,t)}_{\lambda^3}J^{(q,t)}_{\lambda^1\oplus\lambda^2} + 2J^{(q,t)}_{\lambda^1}J^{(q,t)}_{\lambda^2}J^{(q,t)}_{\lambda^3}.
\end{align*} 
An equivalent form of
\cref{ThmSFJack} in terms of cumulants is as follows:
\begin{theorem}
\label{conj:Macdonald}
For any partitions $\la^1, \dots, \la^r$, Macdonald polynomials have a
{\em
  small cumulant property} when $q \to 1$, that is
    \begin{equation*}
        \kumu^J(\la^1,\cdots,\la^r)=O_1\left( (q-1)^{r-1} \right).
    \end{equation*}
\end{theorem}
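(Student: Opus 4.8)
The plan is to derive \cref{conj:Macdonald} from \cref{ThmSFJack}, which — being stated for arbitrary tuples of partitions — applies verbatim to every finite sub-collection of $\lambda^1,\dots,\lambda^r$. I work in $\mathbb{F}(q)$, where $\mathbb{F}$ is the fraction field of the ring of symmetric functions over $\mathbb{Q}(t)$, so that ``$O_1$'' refers to the valuation at $q=1$ of the discrete valuation ring $\mathcal{O}_1\subset\mathbb{F}(q)$. Beyond \cref{ThmSFJack}, the only input about Macdonald polynomials I use is that each $J^{(q,t)}_\mu$ is a \emph{unit} of $\mathcal{O}_1$: indeed its specialization $J^{(1,t)}_\mu$ is a nonzero symmetric function (a nonzero multiple of a product of elementary symmetric functions, by the explicit $q=1$ formula for Macdonald polynomials cited just after \cref{ThmSFJack}). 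For the family $a_I:=J^{(q,t)}_{\lambda^I}$ indexed by $I\subseteq[r]$ (with $a_\emptyset=1$), introduce its Möbius transform on the Boolean lattice of subsets, $b_S:=\prod_{I\subseteq S}a_I^{(-1)^{|S|-|I|}}$; then $a_S=\prod_{I\subseteq S}b_I$, $b_{\{i\}}=a_{\{i\}}$, and $b_S=\big(\prod_{I\subseteq S}a_I^{(-1)^{|I|}}\big)^{(-1)^{|S|}}$, so that \cref{ThmSFJack} applied to the sub-collection $(\lambda^i)_{i\in S}$ is exactly the statement $b_S-1=O_1\big((q-1)^{|S|-1}\big)$, for every $S\subseteq[r]$.

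From the cumulant side, the defining relation for $\kappa^J$ (equivalently, the moment--cumulant formula — consistent with the examples displayed just before \cref{conj:Macdonald}) reads
\[
\kappa^J(\lambda^1,\dots,\lambda^r)=\sum_{\pi\vdash[r]}(-1)^{\#\pi-1}(\#\pi-1)!\prod_{B\in\pi}J^{(q,t)}_{\lambda^B}.
\]
Now substitute $J^{(q,t)}_{\lambda^B}=a_B=\prod_{I\subseteq B}b_I=\big(\prod_{i\in B}a_{\{i\}}\big)\prod_{J\subseteq B,\,|J|\ge2}b_J$ and expand each $b_J=1+(b_J-1)$. The coefficient of a fixed product $\prod_{B\in\mathcal{K}}(b_B-1)$ — where $\mathcal{K}$ is a family of subsets of $[r]$, each of cardinality $\ge2$ — equals $\sum_{\pi\geq\sigma(\mathcal{K})}(-1)^{\#\pi-1}(\#\pi-1)!$, with $\sigma(\mathcal{K})$ the finest set partition of $[r]$ in which every $B\in\mathcal{K}$ lies inside a single block. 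Since that sum runs over an interval of the partition lattice which is again a partition lattice, it vanishes unless $\sigma(\mathcal{K})$ is the one-block partition, i.e.\ unless the hypergraph $([r],\mathcal{K})$ is connected. Hence one obtains the bridge formula
\[
\kappa^J(\lambda^1,\dots,\lambda^r)=\Big(\prod_{i=1}^{r} J^{(q,t)}_{\lambda^i}\Big)\sum_{\mathcal{K}}\ \prod_{B\in\mathcal{K}}\big(b_B-1\big),
\]
where $\mathcal{K}$ runs over the families of subsets of $[r]$, each of size at least $2$, for which the hypergraph $([r],\mathcal{K})$ is connected. This passage between ordinary and ``boolean'' cumulants is the only genuine computation here, and it is the place to be careful about signs and about which set partitions survive.

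Finally, if the hypergraph $([r],\mathcal{K})$ is connected then $\sum_{B\in\mathcal{K}}(|B|-1)\ge r-1$, since adjoining one edge $B$ merges at most $|B|-1$ connected components while passing from $r$ components (the singletons) to one requires total merging $r-1$. By \cref{ThmSFJack} applied to $(\lambda^i)_{i\in B}$, each factor $b_B-1$ is $O_1\big((q-1)^{|B|-1}\big)$, so every summand of the bridge formula is $O_1\big((q-1)^{\sum_{B\in\mathcal{K}}(|B|-1)}\big)=O_1\big((q-1)^{r-1}\big)$; as $\prod_{i=1}^{r}J^{(q,t)}_{\lambda^i}$ is a unit of $\mathcal{O}_1$, this forces $\kappa^J(\lambda^1,\dots,\lambda^r)=O_1\big((q-1)^{r-1}\big)$, which is \cref{conj:Macdonald}. (Read in the other direction the bridge formula recovers \cref{ThmSFJack} from \cref{conj:Macdonald}, so the two statements are genuinely equivalent.) Thus, granting \cref{ThmSFJack}, the present argument has no real obstacle past the bookkeeping in the bridge formula; the substantive difficulty lies entirely in \cref{ThmSFJack} itself, which in turn rests on the strong factorization property of interpolation Macdonald polynomials, and which we here take as given.
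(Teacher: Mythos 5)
Your reduction of \cref{conj:Macdonald} to \cref{ThmSFJack} is, in itself, mathematically sound: the M\"obius/Boolean ``bridge formula'' is correct (and your reading of the cumulant with weight $\mu(\pi,\{[r]\})=(-1)^{\#\pi-1}(\#\pi-1)!$ is the right one, consistent with the displayed examples), the connectedness bound $\sum_{B\in\mathcal{K}}(|B|-1)\ge r-1$ is right, and the unit argument for $J^{(1,t)}_\mu$ is exactly the one the paper uses. What you have reproved is essentially the implication from the strong factorization property to the small cumulant property, i.e.\ one direction of \cref{PropEquivalenceSCQF}, which the paper imports from F\'eray's earlier work rather than rederiving.

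The problem is that the argument is circular relative to this paper. \cref{ThmSFJack} is not an available input here: the paper's only proof of it is as a corollary of \cref{conj:Macdonald}, via the equivalence of the two properties (the corollary following \cref{PropEquivalenceSCQF}). The strong factorization property of Macdonald polynomials at $q\to 1$ was an open question before this paper, so there is no external source to appeal to either; assuming \cref{ThmSFJack} amounts to assuming the result you are asked to prove. The paper's actual proof of \cref{conj:Macdonald} runs through the interpolation Macdonald polynomials: one first proves \cref{theo:CumulantsMacdonald}, namely $\kappa^{\mathcal{J}}(\lambda^1,\dots,\lambda^r)=O_1\left((q-1)^{r-1}\right)$, by an induction on $r$ using the eigenoperator $D$ and its deformed Leibniz action on cumulants, and then observes that the top-degree homogeneous part of $\kappa^{\mathcal{J}}(\lambda^1,\dots,\lambda^r)$ is exactly $\kappa^{J}(\lambda^1,\dots,\lambda^r)$, so the $O_1$-bound passes to it. To repair your write-up you would need either to supply a proof of \cref{ThmSFJack} that does not pass through \cref{conj:Macdonald}, or to replace your hypothesis by \cref{theo:CumulantsMacdonald} and add the top-degree extraction step.
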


Instead of proving \cref{conj:Macdonald}, we prove the stronger result
that \emph{interpolation Macdonald polynomials} have a small cumulant property when $q \to 1$, from which \cref{conj:Macdonald} follows as a
special case. To make this section complete, let us introduce
interpolation Macdonald polynomials.

\medskip

Interpolation polynomials are characterized
by certain vanishing condition. Sahi \cite{Sahi1996} proved that for each partition $\la$ of length
$\ell(\la) \leq N$, there exists a unique (inhomogenous) symmetric
\sloppy polynomial $\J^{(q,t)}_\la(\bm{x})$ of degree $|\la|$, where $\bm{x} =
(x_1,\dots,x_N)$, which has the following properties:
\begin{itemize}
\item
in the monomial basis expansion the coefficient
$[m_\la]\J^{(q,t)}_\la(\bm{x})$ is the same as $ [m_\la]J^{(q,t)}_\la(\bm{x})$;
\item
 for all partitions $\mu \neq \la, |\mu| \leq |\la|$ an expression $\J^{(q,t)}_\la(\widetilde{\mu})$ vanishes, where 
\[ \widetilde{\mu}  := (q^{\mu_1}t^{N-1},q^{\mu_2}t^{N-2},\dots,q^{\mu_N}t^{0}
).\]
\end{itemize}
This symmetric polynomial is called \emph{interpolation Macdonald
  polynomial} and it has a remarkable property which explains its name: its top-degree
part is equal to Macdonald polynomial $J^{(q,t)}_\la(\bm{x})$.

Our main result is the following theorem:

\begin{theorem}
\label{theo:CumulantsMacdonald}
    Let $\lambda^1,\cdots,\lambda^r$ be partitions.
    Then
we have a following small cumulant property when $q \to 1$:
\begin{equation*}
    \kumu^{\J}(\la^1,\cdots,\la^r) = O_1\left( (q-1)^{r-1} \right),
\end{equation*}
where $\kumu^{\J}(\la^1,\cdots,\la^r)$ is a cumulant of interpolation
Macdonald polynomials.
\end{theorem}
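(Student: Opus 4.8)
The plan is to prove the small cumulant property for interpolation Macdonald polynomials by exploiting their characterization via vanishing conditions, combined with a careful analysis of the $q\to 1$ limit. First I would recall that $\J^{(q,t)}_\la$ is uniquely determined by the interpolation conditions at the points $\widetilde{\mu}$, and that these points have a clean limiting behaviour: writing $q = 1+\varepsilon$, one has $q^{\mu_i} = 1 + \mu_i\varepsilon + O(\varepsilon^2)$, so up to normalization the alphabet $\widetilde{\mu}$ degenerates to a point governed linearly by the parts of $\mu$. The key structural fact I would isolate is that $\J^{(q,t)}_\la$ admits an expansion in terms of a basis of shifted symmetric functions indexed by the content-type data of $\la$, and that the cumulant $\kumu^{\J}(\la^1,\dots,\la^r)$ is built from the additive combination $\la \mapsto \la^I = \bigoplus_{i\in I}\la^i$. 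Since $\oplus$ is an additive operation on the parts, and the $q\to 1$ degeneration turns multiplicative structure into additive structure, the cumulant of a quantity that is ``additively multiplicative'' (i.e. factorizes on $\oplus$ in the limit) vanishes to the expected order — exactly the mechanism already visible in the $r=2$ case.

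The main steps I would carry out, in order, are: (1) express $\J^{(q,t)}_\la(\xx)$ through a formula — e.g. the combinatorial/branching formula of Okounkov or the binomial-type expansion — that makes the dependence on $\la$ through its parts explicit and separates the ``$q$-dependence'' from the ``shape-dependence''; (2) perform the substitution $q = 1+\varepsilon$ and Taylor-expand each factor, organizing the result as a formal power series in $\varepsilon$ whose coefficients are polynomials in the parts $\la_1,\la_2,\dots$ and in $t$; (3) show that modulo $\varepsilon^{r-1}$, the function $\la \mapsto \J^{(1+\varepsilon,t)}_\la$ is, coefficient by coefficient in $\varepsilon$, a polynomial in the parts of low enough ``additive degree'' that its $r$-th cumulant under the substitution $\la = \la^I$ vanishes; (4) invoke the elementary combinatorial lemma that for any family of quantities $f(\la)$ that, as a function of $(\la^1,\dots,\la^r)$ via $\la^I$, is a sum of products $\prod_i g_i(\la^i)$ of at most $r-1$ factors, the $r$-fold cumulant $\sum_{\pi}(-1)^{\#\pi}\prod_{B\in\pi}f(\la^B)$ is identically zero. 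Summing the error terms then yields $\kumu^{\J}(\la^1,\dots,\la^r) = O_1((q-1)^{r-1})$.

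For step (4) I would use the standard fact about cumulants (Möbius inversion on the partition lattice): if a quantity depending on $r$ ``inputs'' is a product of functions each depending on a proper subset of the inputs, all its top cumulants vanish; more quantitatively, if each coefficient of $\varepsilon^k$ in $\J^{(1+\varepsilon,t)}_{\la^I}$ can be written as a sum of products of functions of the individual $\la^i$ with at most $k+1$ factors, then the $\varepsilon^k$-coefficient of $\kumu^{\J}(\la^1,\dots,\la^r)$ vanishes whenever $r > k+1$, i.e. $k < r-1$. Thus all contributions below order $\varepsilon^{r-1}$ cancel, which is precisely the claim. This reduces the whole theorem to the ``bounded-factor'' structural statement in step (3).

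The hard part will be step (3): proving that the $\varepsilon^k$-coefficient of $\J^{(1+\varepsilon,t)}_\la$, as a function of $\la$, genuinely has the bounded-factorization structure with at most $k+1$ factors when the parts of $\la$ are split as $\la = \la^I$. At $q=1$ this is the known factorization $J^{(1,t)}_{\la\oplus\mu} = J^{(1,t)}_\la J^{(1,t)}_\mu$, i.e. the $k=0$ case; the challenge is to propagate this control to all higher orders in $\varepsilon$ uniformly. I expect the right tool is the explicit interpolation/branching formula for $\J^{(q,t)}_\la$ expressed as a sum over (reverse) tableaux or over ``horizontal-strip'' chains, where each factor in a summand is a ratio of $q$-shifted products indexed by a single cell; under $q = 1+\varepsilon$ each such local factor expands as $1 + (\text{linear in one part})\varepsilon + \cdots$, and the number of cells contributing a non-constant term to the $\varepsilon^k$-coefficient is at most $k$. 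Making this counting precise — in particular keeping track of how the splitting $\la = \la^I$ interacts with the tableau sum so that the surviving non-constant local factors depend on at most $k$ of the $\la^i$, giving at most $k+1$ combined factors — is where the real work lies, and is the step whose details I would expect to occupy the bulk of the proof.
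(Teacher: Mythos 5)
Your overall strategy is genuinely different from the paper's (which characterizes $\J^{(q,t)}_\la$ as an eigenfunction of the operator $D$, expands $D$ in powers of $q-1$ as differential operators, and runs an induction on $r$ using a ``forced Leibniz rule'' decomposition together with \cref{cor:EigenvectorsVanish}), but as written it has a genuine gap in the reduction step (4), and step (3) --- which you correctly identify as the hard part --- is left entirely open and is essentially equivalent to the theorem itself.

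The problem with step (4) is that extracting the coefficient of $\varepsilon^k$ does not commute with taking cumulants in the way you need. Writing $u_I=\J^{(1+\varepsilon,t)}_{\la^I}=\sum_k \varepsilon^k u_I^{(k)}$, one has
\[
[\varepsilon^k]\,\kumu_{[r]}(\uu)\;=\;\sum_{\pi\in\PPP([r])}\mu(\pi,\{[r]\})\sum_{\sum_{B\in\pi}k_B=k}\ \prod_{B\in\pi}u_B^{(k_B)},
\]
so the coefficient you want to kill mixes \emph{different} orders $k_B$ across the blocks of $\pi$. Knowing only that each $u_I^{(k)}$ separately is ``a sum of products of at most $k+1$ single-variable factors'' does not force these cross terms to cancel: already for $r=2$ the vanishing of $[\varepsilon^1]\kumu$ requires the exact identity $u_{12}^{(1)}=u_1^{(0)}u_2^{(1)}+u_2^{(0)}u_1^{(1)}$, a Leibniz-type compatibility between consecutive orders, not merely a bound on the number of factors (take $u_{12}^{(1)}=g(\la^1)h(\la^2)$ with arbitrary $g,h$ for a counterexample to your criterion). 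The correct ``low additive degree implies vanishing'' principle applies to the \emph{additive} inclusion–exclusion $\sum_{G\subseteq H}(-1)^{|H|-|G|}F(\la^G)$ (this is \cref{prop:CumulantOfPartitionsVanishes} in the paper), or, multiplicatively, to $\log$ of the error terms $T_H(\uu)$ of \cref{PropEquivalenceSCQF} --- not to the partition-lattice cumulant coefficient by coefficient. The paper's way around exactly this difficulty is to act with $D$, force the Leibniz rule via $\widetilde{D}$, and control the discrepancy through \cref{lem:zasada,lem:A1SimpleForm,lem:A2SimpleForm}, which is where the order-mixing is tamed. Finally, for step (3): the branching/binomial formulas for interpolation Macdonald polynomials have Pieri-type coefficients whose numerators and denominators both degenerate at $q=1$, so the claim that each local factor expands as $1+O(\varepsilon)$ with controlled dependence on the splitting $\la=\la^I$ is not automatic and would need a genuinely new argument; as it stands, your proposal defers the entire content of the theorem to this unproved structural statement.
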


Since the top-degree part of $\kumu^{\J}(\la^1,\dots,\la^r)$ is equal
to $\kumu^J(\la^1,\dots,\la^r)$, \cref{conj:Macdonald} follows.

\subsection{Higher--order Macdonald's positivity conjecture}

As we already mentioned, the purpose of this paper is to generalize
$q,t$-Kostka numbers and to prove that they are polynomials in $q,t$
with integer coefficients. Before we define the \emph{multivariate $q,t$-Kostka
numbers} we just mention that strictly from the definition of $q,t$-Kostka numbers, they are
elements of $\QQ(q,t)$, and it took six or seven years after
Macdonald formulated his conjecture to prove that they are in fact
polynomials in $q,t$ with integer coefficients, which was proved
independently by many authors \cite{GarsiaRemmel1998,
  GarsiaTesler1996, KirillovNoumi1998, Knop1997,
  LapointeVinet1997, Sahi1996}. This result will be important to prove
the integrality of the multivariate $q,t$-Kostka numbers.

Let $\lambda^1,\cdots,\lambda^r$ be partitions. We define the
\emph{multivariate $q,t$-Kostka numbers}
$K^{(q,t)}_{\mu; \la^1,\dots,\la^r}$ by the following equation
\[ \kumu^J(\la^1,\dots,\la^r) = (q-1)^{r-1}
\sum_{\mu\ \vdash \left|\la^{[r]}\right|}K^{(q,t)}_{\mu;\la^1,\dots,\la^r}s_\mu[\bm{X}(1-t)]. \]

Note that when $r=1$, the multivariate $q,t$-Kostka number
$K^{(q,t)}_{\mu; \la^1}$ is equal to the ordinary $q,t$-Kostka number
$K^{(q,t)}_{\mu,\la}$ with $\la^1 = \la$.

In particular, integrality of Littlewood-Richardson
coefficients together with the integrality result on
$q,t$-Kostka numbers implies that
\[ (q-1)^{r-1} K^{(q,t)}_{\mu;\la^1,\dots,\la^r} \in \Z[q,t].\]

Thus applying \cref{conj:Macdonald} into the above result, we obtain immediately the following theorem:

\begin{theorem}
\label{theo:GeneralizedKostka}
Let $\lambda^1,\dots,\lambda^r$ be partitions. Then, for any
partition $\mu$, the multivariate $q,t$-Kostka number
$K^{(q,t)}_{\mu; \la^1,\dots,\la^r}$ is a polynomial in $q,t$ with
integer coefficients.
\end{theorem}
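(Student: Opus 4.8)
The plan is to derive \cref{theo:GeneralizedKostka} by combining three ingredients that are already available at this point in the paper. First I would invoke \cref{conj:Macdonald}, which tells us that $\kumu^J(\la^1,\dots,\la^r) = O_1\!\left((q-1)^{r-1}\right)$, so that the rational function $\frac{1}{(q-1)^{r-1}}\kumu^J(\la^1,\dots,\la^r)$ has no pole at $q=1$. Second, I would use the definition of the multivariate $q,t$-Kostka numbers, namely
\[
\kumu^J(\la^1,\dots,\la^r) = (q-1)^{r-1}\sum_{\mu\,\vdash\,|\la^{[r]}|} K^{(q,t)}_{\mu;\la^1,\dots,\la^r}\, s_\mu[\bm{X}(1-t)],
\]
together with the linear independence of the plethystic Schur functions $s_\mu[\bm{X}(1-t)]$ (in sufficiently many variables), which lets us read off each $K^{(q,t)}_{\mu;\la^1,\dots,\la^r}$ as a well-defined element of $\QQ(q,t)$ and, more importantly, guarantees that the ``no pole at $q=1$'' property of the left-hand side passes to each individual coefficient $K^{(q,t)}_{\mu;\la^1,\dots,\la^r}$.

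Next I would bring in the classical integrality results. Expanding each Macdonald polynomial $J^{(q,t)}_{\la^B}$ appearing in $\kumu^J(\la^1,\dots,\la^r)$ into the basis $s_\mu[\bm{X}(1-t)]$ with the ordinary $q,t$-Kostka numbers $K^{(q,t)}_{\nu,\la^B}$ as coefficients, and then multiplying out the products over blocks $B$ of each set partition $\pi$ using the Littlewood--Richardson rule, one obtains that $\kumu^J(\la^1,\dots,\la^r)$ is an explicit $\Z[q,t]$-linear combination of the $s_\mu[\bm{X}(1-t)]$: here one uses that the $K^{(q,t)}_{\nu,\la}$ lie in $\Z[q,t]$ (the theorem of Garsia--Remmel, Garsia--Tesler, Kirillov--Noumi, Knop, Lapointe--Vinet, Sahi cited in the text) and that the Littlewood--Richardson coefficients are nonnegative integers. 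Comparing coefficients of $s_\mu[\bm{X}(1-t)]$ then gives $(q-1)^{r-1} K^{(q,t)}_{\mu;\la^1,\dots,\la^r} \in \Z[q,t]$, exactly as recorded in the excerpt just before the statement.

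Finally I would combine the two halves: $(q-1)^{r-1} K^{(q,t)}_{\mu;\la^1,\dots,\la^r}$ is a polynomial in $\Z[q,t]$ that, by the small cumulant property, is divisible by $(q-1)^{r-1}$ as a rational function, i.e.\ vanishes to order at least $r-1$ at $q=1$. Since $\Z[q,t]$ is a UFD and $q-1$ is a prime element of it, a polynomial in $\Z[q,t]$ divisible (in $\QQ(q,t)$, hence in $\QQ[q,t]$ by Gauss, hence in $\Z[q,t]$ after clearing the content) by $(q-1)^{r-1}$ has its quotient $K^{(q,t)}_{\mu;\la^1,\dots,\la^r}$ again in $\Z[q,t]$; concretely one divides the polynomial by $(q-1)^{r-1}$ in $\Z[q][t]$ using that $q-1$ is monic in $q$, so the division algorithm stays within $\Z[q,t]$. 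This yields $K^{(q,t)}_{\mu;\la^1,\dots,\la^r}\in\Z[q,t]$.

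The only genuinely delicate point is the divisibility bookkeeping in the last paragraph: one must be careful that ``no pole at $q=1$'' for an element of $\QQ(q,t)$ which is secretly in $\Z[q,t]$ really does upgrade to divisibility by $(q-1)^{r-1}$ inside $\Z[q,t]$ rather than merely in $\QQ(q)[t]$. This is where the factoriality of $\Z[q,t]$ and the primality of $q-1$ are essential; everything else — the expansion of cumulants into $s_\mu[\bm{X}(1-t)]$, the use of Littlewood--Richardson positivity, the appeal to \cref{conj:Macdonald} — is routine given the machinery developed earlier in the paper. Hence the proof is essentially the three-line argument already sketched in the text, with this divisibility subtlety made explicit.
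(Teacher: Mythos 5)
Your proposal is correct and follows exactly the route the paper takes: expand the cumulant into the plethystic Schur basis using the integrality of the ordinary $q,t$-Kostka numbers and of the Littlewood--Richardson coefficients to get $(q-1)^{r-1}K^{(q,t)}_{\mu;\la^1,\dots,\la^r}\in\Z[q,t]$, then apply \cref{conj:Macdonald} to remove the factor $(q-1)^{r-1}$. The paper leaves the final divisibility step implicit (``we obtain immediately''), whereas you spell out the monic division by $(q-1)^{r-1}$ in $\Z[t][q]$; that is a correct and welcome elaboration, not a different proof.
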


We recall that Macdonald's positivity conjecture is a well-established
theorem nowadays since Haiman proved it in 2001
\cite{Haiman2001}. We ran some computer
simulations which suggested that multivariate $q,t$-Kostka numbers are also
polynomials with positive coefficients. Unfortunately, we are not able to prove it, since our techniques of
the proof of \cref{theo:CumulantsMacdonald} does
not seem to be applicable to this problem and we state it in this paper as a conjecture.

\begin{conjecture}
Let $\lambda^1,\dots,\lambda^r$ be partitions. Then, for any
partition $\mu$, the multivariate $q,t$-Kostka number
$K^{(q,t)}_{\mu;\la^1,\dots,\la^r}$ is a polynomial in $q,t$ with
positive, integer coefficients.
\end{conjecture}

\subsection{Related problems}
We finish this section, mentioning some similar or somewhat related problems.
First, we recall that one of the most typical
application of cumulant is to show that a certain family of random
variables is asymptotically Gaussian. Especially, when one deals with
discrete structures, the main
technique is to show that cumulants have a certain \emph{small cumulant
  property}, which is in the same spirit as our \cref{conj:Macdonald};
see \cite{Sniady2006c, FerayMeliot2012, Feray2013,
  DolegaSniady2017}. It is therefore natural to ask for a
probabilistic interpretation of \cref{conj:Macdonald}. In
particular, does it
lead to some kind of central limit theorem? The most natural framework
to investigate this problem seems to be related with Macdonald
processes introduced by Borodin and Corwin \cite{BorodinCorwin2014} or
representation-theoretical interpretation of Macdonald polynomials
given by Haiman \cite{Haiman2001}.

A second problem is related to the combinatorics of Jack polynomials,
which are special cases of Macdonald polynomials. In fact,
\cref{conj:Macdonald} was posed as an open question in our previous
paper joint with F\'eray \cite{DolegaFeray2016}, where we proved that
Jack polynomials have a strong factorization property when $\alpha \to 0$, where
$\alpha$ is the Jack-deformation parameter. In the same paper we use this result as a key tool to
prove the polynomiality part of the so-called \emph{$b$-conjecture}, stated by
Goulden and Jackson \cite{GouldenJackson1996}. This conjecture says
that a certain
multivariate generating function involving Jack symmetric functions
expressed in the power-sum basis gives rise to the multivariate generating function of bipartite maps
(bipartite graphs embedded into some surface), where the exponent  of $\beta
:= \alpha - 1$ 
has an interpretation as some mysterious ``measure of
non-orientability'' of the associated map. The conjecture is still open,
while some special cases have been solved \cite{GouldenJackson1996,
  LaCroix2009, KanunnikovVassilieva2014, Dolega2016}. It is very
tempting to build a $q,t$-framework which will generalize the
$b$-conjecture. Although we can simply replace Jack polynomials
by Macdonald polynomials in the definition of the multivariate generating
function given by Goulden and Jackson and use the same techniques as in
\cite{DolegaFeray2016} to prove that expanding it in a properly
normalized power-sum basis
we obtain polynomials in $q,t$, we \emph{do not} obtain positive,
neither integer coefficients. Therefore, we leave wide-open a question
of the possibility of building a proper framework which generalizes
the $b$-conjecture to two parameters in a way that it is related to
counting some combinatorial objects.

\subsection{Organization of the paper}
\label{subsec:outline}

We describe all necessary definitions and background in
\cref{sec:preliminaries}. \cref{sec:Proof} gives the proof of
\cref{theo:CumulantsMacdonald} which is preceded by an explanation of the
main idea of the proof. In \cref{sec:cumulants} we
discuss cumulants and their relation with the strong factorization
property, and we investigate a relation between cumulants and
derivatives that is in the heart of the proof of \cref{theo:CumulantsMacdonald}.
Finally, \cref{sec:proof} is devoted to the proof of two intermediate
steps of the proof of \cref{theo:CumulantsMacdonald}.

\section{Preliminaries}
\label{sec:preliminaries}

\subsection{Set partitions lattice}
\label{SubsecSetPartitions}
The combinatorics of set partitions is central in the theory of cumulants 
 and will be important in this article.
 We recall here some well-known facts about them.

A {\em set partition} of a set $S$ is a (non-ordered) family of non-empty disjoint
subsets of $S$ (called parts of the partition), whose union is $S$.
In the following, we always assume that $S$ is finite.

Denote $\PPP(S)$ the set of set partitions of a given set $S$.
Then $\PPP(S)$ may be endowed with a natural partial order:
the {\em refinement} order.
We say that $\pi$ is {\em finer} than $\pi'$ (or $\pi'$ {\em coarser} than $\pi$)
if every part of $\pi$ is included in a part of $\pi'$.
We denote this by $\pi \leq \pi'$.

Endowed with this order, $\PPP(S)$ is a complete lattice, which means that
each family $F$ of set partitions admits a join (the finest set partition
which is coarser than all set partitions in $F$; we denote the join operator by $\vee$)
and a meet (the coarsest set partition
which is finer than all set partitions in $F$; we denote the meet operator by $\wedge$).
In particular, the lattice $\PPP(S)$ has a maximum $\{S\}$ (the partition in only one
part) and a minimum $\{ \{x\}, x \in S\}$ (the partition in singletons).


Lastly, denote $\mu$ the M\"obius function of the partition lattice $\PPP(S)$.
Then, for any pair $\pi \leq \si$ of set partitions, the value of the M\"obius function has a product form:
\begin{equation}\label{EqValueMobius}
    \mu(\pi, \si)=\prod_{B' \in \si}\mu\left(\{B \in \pi: B \subset B'\}, \{B'\}\right),
\end{equation}
where the product is taken over all blocks of a partition $\si$, and
for a given block $B' \in \si$ an expression $\mu\left(\{B \in \pi: B \subset B'\}, \{B'\}\right)$ 
denotes a M\"obius
function of the lattice $\PPP(B')$ of the interval in between a
partition $\{B \in \pi: B \subset B'\}$, and a maximal element
$\{B'\}$. This function is given by an explicit formula
\[ \mu\left(\{B \in \pi: B \subset B'\}, \{B'\}\right) = (-1)^{\#\{B
  \in \pi: B\subset B'\}-1} \left(\#\{B \in \pi: B\subset
  B'\}-1\right)!,\]
where $\#\pi$ denotes the number of parts of $\pi$.

We finish this section by stating a well-known result on computing a M\"obius functions of lattices.

\begin{proposition}[Weisner's Theorem \cite{Weisner1935}]
\label{prop:Weisner}
For any $\pi < \tau \leq \si$ in a lattice $L$ we have
\[ \sum_{\substack{\pi \leq \omega \leq \si: \\ \omega \vee \tau = \si}}\mu(\pi, \omega) = 0.\]
\end{proposition}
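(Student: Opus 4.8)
The plan is to prove Weisner's Theorem by the classical incidence-algebra trick: introduce an auxiliary double sum and evaluate it in two different ways, each time collapsing an inner sum via a defining recurrence of the Möbius function. Throughout I would work inside the interval $[\pi,\sigma]$, regarded as a finite lattice with bottom element $\pi$ and top element $\sigma$; the hypothesis $\pi < \tau \leq \sigma$ guarantees that $\tau$ lies in this interval and that $\tau \neq \pi$, which is the only place the hypothesis is used. I would also emphasize that nothing here requires the explicit product formula for $\mu$ recalled above — the statement holds for an arbitrary finite lattice $L$.

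First I would recall the two dual forms of the defining property of the Möbius function of a finite poset: for every $\rho$ with $\pi \leq \rho \leq \sigma$ one has $\sum_{\pi \leq \omega \leq \rho} \mu(\pi,\omega) = [\pi = \rho]$, and for every $\omega$ with $\pi \leq \omega \leq \sigma$ one has $\sum_{\omega \leq \rho \leq \sigma} \mu(\rho,\sigma) = [\omega = \sigma]$ (both being the translations of $\zeta\ast\mu = \delta = \mu\ast\zeta$ in the incidence algebra of $[\pi,\sigma]$). Then I would set
\[ N := \sum_{\omega \in [\pi,\sigma]} \mu(\pi,\omega) \sum_{\substack{\rho \in [\pi,\sigma] \\ \rho \geq \omega \vee \tau}} \mu(\rho,\sigma). \]
Evaluating the inner sum by the second recurrence, with lower bound $\omega \vee \tau$, collapses it to $[\omega \vee \tau = \sigma]$, so that $N = \sum_{\omega \vee \tau = \sigma} \mu(\pi,\omega)$, which is exactly the left-hand side of the identity to be proved.

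Next I would swap the order of summation in $N$. Since $\omega \vee \tau \leq \rho$ is equivalent to the conjunction ``$\omega \leq \rho$ and $\tau \leq \rho$'', the inner sum over $\omega$ is empty whenever $\tau \not\leq \rho$, and for $\tau \leq \rho$ it equals $\sum_{\pi \leq \omega \leq \rho}\mu(\pi,\omega) = [\pi = \rho]$ by the first recurrence. Hence $N = \sum_{\rho:\, \tau \leq \rho \leq \sigma} \mu(\rho,\sigma)\,[\pi = \rho]$, and a surviving term would require simultaneously $\rho = \pi$ and $\tau \leq \rho$, i.e. $\tau \leq \pi$, contradicting $\pi < \tau$. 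Therefore $N = 0$, and comparing with the first evaluation yields $\sum_{\omega \vee \tau = \sigma} \mu(\pi,\omega) = 0$, as claimed.

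There is no genuine obstacle in this argument; the only real content is the choice of the auxiliary sum $N$ — pairing $\mu(\pi,\omega)$ with $\mu(\rho,\sigma)$ under the single constraint $\rho \geq \omega \vee \tau$ — engineered so that \emph{both} evaluations are one-line applications of the Möbius recurrences. The minor points to check are that all summation indices remain within $[\pi,\sigma]$ (so the relative Möbius functions are the intended ones) and that the reduction of ``$\omega\vee\tau\le\rho$'' to ``$\omega\le\rho$ and $\tau\le\rho$'' is valid, which is just the universal property of the join.
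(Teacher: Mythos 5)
Your proof is correct. There is, however, nothing in the paper to compare it against: the paper states this proposition as a known result, citing Weisner's original 1935 paper, and gives no proof of its own. What you supply is the classical convolution (double-counting) argument: pair $\mu(\pi,\omega)$ with $\mu(\rho,\sigma)$ under the single constraint $\rho \geq \omega \vee \tau$, and evaluate the double sum in both orders using the two recurrences $\sum_{\pi \leq \omega \leq \rho}\mu(\pi,\omega) = [\pi=\rho]$ and $\sum_{\eta \leq \rho \leq \sigma}\mu(\rho,\sigma) = [\eta=\sigma]$. Both collapses are applied correctly, the translation of $\omega\vee\tau\leq\rho$ into ``$\omega\leq\rho$ and $\tau\leq\rho$'' is indeed just the universal property of the join (and $\omega\vee\tau$ stays inside $[\pi,\sigma]$ because $\omega,\tau\leq\sigma$), and the hypothesis $\pi<\tau\leq\sigma$ is used exactly where you say: $\tau\leq\sigma$ keeps the inner interval nonempty, and $\pi<\tau$ kills the only potentially surviving term $\rho=\pi$. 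The sole implicit assumption is that the interval $[\pi,\sigma]$ is finite so that $\mu$ is defined, which is harmless here since the paper applies the result only to partition lattices of finite sets.
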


\subsection{Partitions}
\label{SubsecPartitions}
We call $\lambda := (\lambda_1, \lambda_2, \dots, \lambda_l)$ \emph{a partition} of $n$
if it is a weakly decreasing sequence of positive
integers such that $\la_1+\la_2+\cdots+\la_l = n$.
Then $n$ is called {\em the size} of $\lambda$ while $l$ is {\em its length}.
As usual we use the notation $\la \vdash n$, or $|\la| = n$, and $\ell(\la) = l$.
We denote the set of partitions of $n$ by $\Y_n$ and we define a partial order on $\Y_n$,
called {\em dominance order}, in the following way:
\[ \lambda \leq \mu \iff \sum_{i\leq j}\lambda_i \leq \sum_{i\leq j}\mu_i \text{ for any positive integer } j.\]
Then, we extend the notion of dominance order on the set of partitions of arbitrary size by saying that
\[ \lambda \preceq \mu \iff |\la| < |\mu|, \text{ or } |\la| = |\mu|
\text{ and } \la \leq \mu.\]

For any two partitions $\la \in \Y_n$ and $\mu \in \Y_m$ we can
construct a new partition $\la \oplus \mu \in \Y_{n+m}$ by setting $\la \oplus \mu := (\la_1+\mu_1,\la_2+\mu_2,\dots)$.
Moreover, there exists a canonical involution on the set $\Y_n$,
which associates with a partition $\la$ its \emph{conjugate partition} $\la^t$.
By definition, the $j$-th part $\la_j^t$ of the conjugate partition
is the number of positive integers $i$ such that $\la_i \ge j$.
A partition $\la$ is identified with some geometric object, called \emph{Young diagram},
that can be defined as follows:
\[ \la = \{(i, j):1 \leq i \leq \la_j, 1 \leq j \leq \ell(\la) \}.\]
 For any
 box $\square := (i,j) \in \la$ from Young diagram we define its \emph{arm-length} by $a(\square) := \la_j-i$ and its \emph{leg-length} by $\ell(\square) := \la_i^t-j$ (the same definitions as in \cite[Chapter I]{Macdonald1995}), see \cref{fig:ArmLeg}.

    \begin{figure}[t]
        \[
    \begin{array}{c}
    \Yfrench
    \Yboxdim{1cm}
    \begin{tikzpicture}[scale=0.75]
        \newcommand\yg{\Yfillcolour{gray!40}}
        \newcommand\yw{\Yfillcolour{white}}
        \newcommand\tralala{(i,j)}
        \newcommand\arml{a(\square)}
        \newcommand\legl{\ell(\square)}
        \tgyoung(0cm,0cm,;;;;;;;;;;,;;_6,;|4;;;;,;:;;;,;:;;,;:;)
       \Yfillopacity{1}
       \Yfillcolour{gray!40}
       \tgyoung(2cm,1cm,_6\arml)
       \tgyoung(1cm,2cm,|4\legl)
       \Yfillopacity{0}
       \Ylinethick{2pt}
       \tgyoung(1cm,1cm,\square)
       \draw[->](1.5,3.5)--(1.5,2.3);
       \draw[->](1.5,4.5)--(1.5,5.7);
       \draw[->](5.7,1.5)--(7.7,1.5);
       \draw[->](4.3,1.5)--(2.3,1.5);
       \end{tikzpicture}
    \end{array}
        \]
     \caption{Arm and leg length of boxes in Young diagrams.}
     \label{fig:ArmLeg}
    \end{figure}
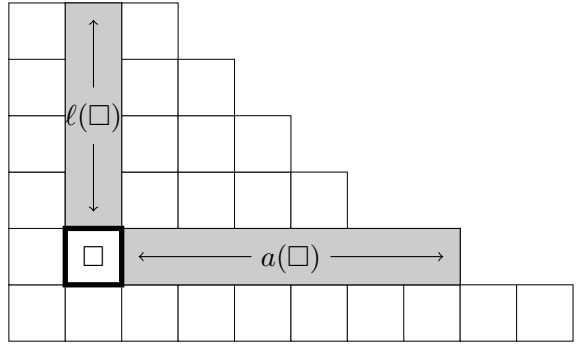

Finally, we define two combinatorial quantities associated with partitions that we will use extensively through this paper.
First, 
we define the \emph{$(q,t)$-hook polynomial} $h_{(q,t)}(\la)$ 
by the following equation
\begin{align}
\label{eq:HookProduct}
h_{(q,t)}(\la) &:= \prod_{\square \in \la}\left(1 - q^{a(\square)}t^{\ell(\square) +1} \right).
\end{align}
We also introduce a partition binomial given by
\begin{equation}
\label{eq:BinomialEigenvalue}
b^N_j(\lambda) := \sum_{1 \leq i \leq N}\binom{\lambda_i}{j}t^{N-i}.
\end{equation}

\subsection{Interpolation Macdonald polynomials as eigenfunctions}
\label{subsec:laplace}

We already defined interpolation Macdonald polynomials in
\cref{subsec:SFP}, but we are going to introduce another, equivalent
definition that is more convenient in the framework of the following
paper. Since this is now a well-established theory,
results of this section are given without proofs but with explicit references to the literature
(mostly to Macdonald's book \cite{Macdonald1995} and Sahi's paper \cite{Sahi1996}).

First, consider the vector space $\Symm_N$ of symmetric polynomials in
$N$ variables over $\QQ(q,t)$. Let $T_{q,x_i}$ be the ``q-shift
operator'' defined by 
\[T_{q,x_i} f(x_1,\dots,x_N) := f(x_1,\dots,qx_i,\dots,x_N),\]
and 
\[A_i(\bm{x};t)f(x_1,\dots,x_N) := \left(\prod_{j\neq i} \frac{tx_i-x_j}{x_i-x_j}\right)f(x_1,\dots,x_N).\]
Let us define an operator 
\begin{equation} 
\label{eq:D}
D := \sum_i A_i(\bm{x};t)(1-x_i^{-1})\left(T_{q,x_i}-1\right).
\end{equation}
\begin{proposition}
    \label{PropDefMacdonald}
    There exists a unique family $\J_\la^{(q,t)}$ (indexed by partitions $\la$ of length at most $N$)
    in $\Symm^N$ that satisfies:
    \begin{enumerate}[noitemsep,topsep=0pt,parsep=0pt,partopsep=0pt]
        \item[(C1)]
            $\J^{(q^{-1},t^{-1})}_\la(\bm{x})$ is an eigenvector of $D$
with eigenvalue 
\[\ev(\la) := \sum_{1 \leq i \leq N} (q^{\la_i}-1)t^{N-i} = \sum_{j
  \geq 1}(q-1)^j b^N_j(\lambda);\]
\item[(C2)] the monomial expansion of $\J_\la^{(q,t)}$ is given by
  \begin{equation*}
   \J^{(q,t)}_\la = h_{(q,t)}(\la) m_\la + \sum_{\nu \prec
     \la}a^{\la}_\nu m_\nu,  \text{ where } a^{\la}_\nu
     \in \begin{cases} \Z[q,t] &\text{ for } |\nu| = |\la|,\\ \Z[q,t^{-1},t] &\text{ for } |\nu| < |\la|.\end{cases}
  \end{equation*}
    \label{eq:MacdonaldInMonomial}
    \end{enumerate}
    These polynomials are called {\em interpolation Macdonald polynomials}.
\end{proposition}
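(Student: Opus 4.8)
The statement is classical, going back to Knop and Sahi \cite{Sahi1996, Knop1997} and Macdonald \cite{Macdonald1995}, and the quickest honest route is simply to cite them; here is the argument I would give.

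\emph{Step 1: triangularity of $D$.} First I would record that $D$ acts ``triangularly'' on the monomial symmetric polynomials with respect to the order $\preceq$. Split $D = \sum_i A_i(\bm x;t)(T_{q,x_i}-1) - \sum_i A_i(\bm x;t)\,x_i^{-1}(T_{q,x_i}-1)$. Since $(T_{q,x_i}-1)$ multiplies a monomial by $(q^{a_i}-1)$, every monomial of $(T_{q,x_i}-1)f$ is divisible by $x_i$, so both sums preserve the space of symmetric polynomials; moreover the second sum strictly lowers total degree and hence sends $m_\mu$ into $\Span\{m_\nu : |\nu|<|\mu|\}\subset\Span\{m_\nu:\nu\prec\mu\}$. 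For the first sum I would use the classical fact that the homogeneous Macdonald operator $\sum_i A_i(\bm x;t)T_{q,x_i}$ is triangular in the monomial basis at fixed degree with $m_\mu$-eigenvalue $\sum_i q^{\mu_i}t^{N-i}$ (Macdonald's book, Ch.\ VI), together with the Lagrange-interpolation identity $\sum_i A_i(\bm x;t)\cdot 1 = \sum_{i=1}^{N}t^{N-i}$. Subtracting gives $\sum_i A_i(\bm x;t)(T_{q,x_i}-1)\,m_\mu = \big(\sum_i(q^{\mu_i}-1)t^{N-i}\big)m_\mu + (\text{lower}) = \ev(\mu)\,m_\mu + (\text{lower})$, with ``lower'' a $\QQ(q,t)$-combination of $m_\nu$, $\nu\prec\mu$. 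Hence $D\,m_\mu=\ev(\mu)\,m_\mu+\sum_{\nu\prec\mu}(\ast)\,m_\nu$.

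\emph{Step 2: existence and uniqueness of the eigenvector.} Next I would check that the diagonal entries $\ev(\nu)$ are pairwise distinct in $\QQ(q,t)$: from $\ev(\nu)+\sum_i t^{N-i}=\sum_i q^{\nu_i}t^{N-i}$ one reads off, as the coefficient of each $q^{k}$ with $k\ge1$, the quantity $\sum_{i:\nu_i=k}t^{N-i}$, which recovers $\nu$ (padded to length $N$). Restricting $D$ to the finite-dimensional span of $\{m_\nu:|\nu|\le|\la|\}$, triangularity together with distinct diagonal entries shows that $\ev(\la)$ is a simple eigenvalue there and that its eigenvector, normalised so that the $m_\la$-coefficient is $1$, has the form $m_\la+\sum_{\nu\prec\la}(\ast)m_\nu$. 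Performing the harmless change of variables $q\mapsto q^{-1}$, $t\mapsto t^{-1}$ and rescaling by $h_{(q,t)}(\la)$ produces a polynomial satisfying (C1) with $[m_\la]$-coefficient $h_{(q,t)}(\la)$; uniqueness is immediate from the same simplicity statement, and the shape of the lower-order terms is exactly the triangularity of Step 1, giving the $\nu\prec\la$ part of (C2).

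\emph{Step 3: integrality of the coefficients.} The first two steps only give $a^\la_\nu\in\QQ(q,t)$, and controlling denominators is where the real work lies. For the top-degree coefficients ($|\nu|=|\la|$) the monomial coefficients of $\J^{(q,t)}_\la$ coincide with those of the integral-form Macdonald polynomial $J^{(q,t)}_\la$, so here I would simply invoke the classical integrality theorem \cite{GarsiaRemmel1998,GarsiaTesler1996,KirillovNoumi1998,Knop1997,LapointeVinet1997,Sahi1996}, which asserts precisely $a^\la_\nu\in\Z[q,t]$. For the strictly lower-degree coefficients ($|\nu|<|\la|$) the cleanest path is to bring in the extra-vanishing characterisation from \cref{subsec:SFP}: $\J^{(q,t)}_\la$ vanishes at $\widetilde{\mu}$ for every $\mu\neq\la$ with $|\mu|\le|\la|$ (in fact, by Knop--Sahi, at every $\widetilde{\mu}$ with $\la\not\subseteq\mu$). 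Writing the unknown $a^\la_\nu$ against this linear system and solving it recursively by decreasing degree, using the already-known top part, expresses each $a^\la_\nu$ through evaluations $m_\rho(\widetilde{\mu})\in\Z[q,t,t^{-1}]$ divided by factors $1-q^{a}t^{b}$ forming the invertible diagonal of the system; a Vandermonde-type cancellation (equivalently, Knop's explicit lowering-operator construction of $\J^{(q,t)}_\la$, which manifestly preserves $\Z[q,t,t^{-1}]$-coefficients) clears those denominators and leaves $a^\la_\nu\in\Z[q,t^{-1},t]$. Carrying out this cancellation cleanly is the main obstacle, and it is the reason this section is, as the authors note, presented with references in place of proofs.
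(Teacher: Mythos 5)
Your proposal matches the paper's treatment of this proposition: the paper gives no proof at all, simply citing Sahi's paper (with a dictionary between normalizations), and the triangularity and distinct-eigenvalue argument you sketch in Steps 1--2 is exactly the computation the paper itself records later in the same section (the expansion of $D$ acting on $m_\lambda$ and \cref{cor:EigenvectorsVanish}). The only genuinely hard point --- integrality of the coefficients $a^{\lambda}_\nu$, especially for $|\nu|<|\lambda|$ --- is in both your write-up and the paper ultimately deferred to Sahi, so relative to the paper's own proof there is no gap.
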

This is a result of Sahi \cite{Sahi1996}. His original
definition requires that the coefficients $a^{\la}_\nu$ are only
rational functions in $q,t$ with rational coefficients, but in the
same paper Sahi proved that they are in fact polynomials in
$q,t^{-1},t$ (and even in $q,t$ when $|\nu| = |\la|$) with
integer coefficients, which will be important for us later. We just add for completness of the presentation that
we are using different notation and normalization than Sahi, so
function $R_\la(x;q^{-1},t^{-1})$ from Sahi's paper \cite{Sahi1996} is equal to 
$\left( h_{(q,t)}(\la)\right)^{-1}\J^{(q,t)}_\la(\bm{x})$ with our notation, and $c_\la(q,t)$ from Sahi's paper is the same as
$h_{(q,t)}(\la)$ with our notation.

Above definition says that the interpolation
Macdonald polynomial $\J_\la^{(q,t)}$ depends on the parameter $N$,
that is the number of variables.
However, one can show that it satisfies the compatibility relation
$\J_\la^{(q,t)}(x_1,\dots,x_N,0)=\J_\la^{(q,t)}(x_1,\dots,x_N)$ and thus $\J_\la^{(q,t)}$ can be seen as a symmetric function.
In the sequel, when working with differential operators,
we sometimes confuse a symmetric function $f$
with its restriction $f(x_1,\dots,x_N,0,0,\dots)$ to $N$ variables.

\medskip

It was shown by Macdonald \cite[Chapter VI, (3.9)--(3.10)]{Macdonald1995} that
\[ \left(\sum_{1 \leq i \leq N}A_i(\bm{x};t)T_{q,x_i}\right) m_\la = \left(\sum_{1 \leq i \leq N}q^{\la_i}t^{N-i}\right) m_\la + \sum_{\nu < \la}b^{\la}_\nu m_\nu,\]  
where $b^{\la}_\nu \in \Z[q,t]$. Moreover, it is easy to show (see for example \cite[Lemma 3.3]{Sahi1996}) that
\begin{equation*}
\sum_i A_i(\bm{x};t) = \sum_i t^{N-i}.
\end{equation*}
Plugging it into \cref{eq:D} we observe that:
\[ D\ m_\la = \ev(\la)\ m_\la + \sum_{\nu \prec \la}c^{\la}_\nu\ m_\nu,
\text{ where } c^{\la}_\nu \in \Z[q,t].\]
Note that we can expand operator $D$ around $q=1$ as a linear combination of differential operators in the following form:
\begin{equation} 
\label{eq:DAsDifferential}
D = \sum_{j \geq 1}\frac{(q-1)^{j}}{j!}\sum_i
\left(A_i(\bm{x};t)(x_i^j-x_i^{j-1}) D^j_i\right),
\end{equation}
where $D^j_i := \frac{\partial^j}{\partial x_i^j}$. 
As a consequence we have the following identity:
\begin{multline}
\label{eq:MonomialAction}
\sum_{1 \leq i \leq N}\left(A_i(\bm{x};t)(x_i^j-x_i^{j-1})D^j_i
\right) m_\la = \partial_q^{j}\big(\ev(\la)\big)_{q=1}
m_\la \\
+ \sum_{\nu \prec
   \la}\partial^{j}_q \left(c^{\la}_\nu\right)_{q=1}
 m_\nu = j!\ b^N_j(\la)\ m_\la + \sum_{\nu \prec \la}d^{\la}_\nu m_\nu,
\end{multline}
where $\partial_q$ is a partial derivative with respect to $q$, $b^N_j(\la)$ is given by \cref{eq:BinomialEigenvalue}, and $d^{\la}_\nu \in \Z[t]$.

\begin{corollary}
\label{cor:EigenvectorsVanish}
Let $f \in \Symm$ be a symmetric function with an expansion in the monomial basis of the following form:
\[ f = \sum_{\mu \prec \lambda} d_\mu m_\mu,\]
where $\la$ is a fixed partition, and $d_\mu \in \QQ(t)$.
If, for any number $N$ of variables, $\sum_{1 \leq i \leq N}\left(A_i(\bm{x};t)(x_i-1)D^1_i \right) f = b^N_1(\la) f$ then $f = 0$.
\end{corollary}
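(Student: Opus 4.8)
The plan is to argue by contradiction: suppose $f \neq 0$, and let $\mu$ be a maximal element (with respect to $\preceq$) among the partitions with $d_\mu \neq 0$ appearing in the monomial expansion $f = \sum_{\nu \prec \lambda} d_\nu m_\nu$. Since all such $\mu$ satisfy $\mu \prec \lambda$, we in particular have $\mu \neq \lambda$. Now I would apply the operator $\sum_{1 \leq i \leq N} A_i(\bm{x};t)(x_i-1)D^1_i$ to $f$ and read off the coefficient of $m_\mu$ on both sides. By \cref{eq:MonomialAction} with $j=1$, acting on $m_\nu$ produces $b^N_1(\nu)\, m_\nu$ plus a combination of $m_\rho$ with $\rho \prec \nu$; hence the only contribution to the coefficient of $m_\mu$ comes from the term $d_\mu m_\mu$ itself (by maximality of $\mu$, no smaller-indexed $m_\nu$ can feed into $m_\mu$), giving coefficient $d_\mu\, b^N_1(\mu)$. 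On the other hand, the hypothesis says this operator acts on $f$ as multiplication by $b^N_1(\lambda)$, so the coefficient of $m_\mu$ in the result equals $d_\mu\, b^N_1(\lambda)$. Therefore $d_\mu\bigl(b^N_1(\lambda) - b^N_1(\mu)\bigr) = 0$.

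To finish, I must show $b^N_1(\lambda) \neq b^N_1(\mu)$ when $\mu \prec \lambda$, at least for a suitable choice of the number of variables $N$; then $d_\mu = 0$, contradicting the choice of $\mu$, and hence $f = 0$. Recall from \cref{eq:BinomialEigenvalue} that $b^N_1(\lambda) = \sum_{1 \leq i \leq N}\binom{\lambda_i}{1} t^{N-i} = \sum_{1 \leq i \leq N}\lambda_i t^{N-i}$, which is nothing but the generating polynomial of the parts of $\lambda$ in the variable $t$ (padded with zeros up to $N$ entries). If $|\mu| < |\lambda|$, the two polynomials have different values at $t=1$ (namely $|\mu|$ and $|\lambda|$), so they are distinct. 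If $|\mu| = |\lambda|$ but $\mu \neq \lambda$, then $\mu$ and $\lambda$ are genuinely different partitions, so the polynomials $\sum_i \lambda_i t^{N-i}$ and $\sum_i \mu_i t^{N-i}$ differ coefficientwise and are again distinct as polynomials in $t$. In either case $b^N_1(\lambda) - b^N_1(\mu)$ is a nonzero element of $\mathbb{Q}[t]$, hence nonzero in $\mathbb{Q}(t)$, which is the scalar field over which $f$ and its coefficients live. This forces $d_\mu = 0$.

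The only genuinely delicate point is the bookkeeping in the first step: one must make sure that the lower-order terms $\sum_{\rho \prec \nu} d^{\nu}_\rho m_\rho$ produced by \cref{eq:MonomialAction} cannot contribute to the coefficient of $m_\mu$. This is exactly guaranteed by choosing $\mu$ to be $\preceq$-maximal in the support of $f$: any $\nu$ with $d_\nu \neq 0$ satisfies $\nu \preceq \mu$, and a term $m_\rho$ with $\rho \prec \nu \preceq \mu$ cannot equal $m_\mu$. Thus the argument is a clean triangularity argument, and no real obstacle arises beyond being careful that "$\prec$ is a partial (not total) order" does not break the existence of a maximal element in a finite nonempty support — which it does not. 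One should also note that the corollary is stated for symmetric functions (all $N$), but the conclusion $f=0$ already follows from the identity holding for a single sufficiently large $N$; since $f$ has bounded degree, taking $N \geq |\lambda|$ suffices to see all relevant monomials.
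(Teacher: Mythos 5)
Your argument is correct and is essentially the paper's own (the paper's proof is a one-line appeal to \cref{eq:MonomialAction} together with the fact that $b^N_1(\la)=b^N_1(\mu)$ forces $\la=\mu$): triangularity of the operator in the monomial basis, extraction of the coefficient of a maximal $m_\mu$ in the support, and injectivity of $\mu\mapsto b^N_1(\mu)$ for $N$ large. One small phrasing slip: maximality of $\mu$ in the partially ordered support only guarantees that no $\nu$ in the support satisfies $\mu\prec\nu$, not that every such $\nu$ satisfies $\nu\preceq\mu$ --- but that weaker statement is all your triangularity step actually needs, so the proof stands.
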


\begin{proof}
  It is obvious from \cref{eq:MonomialAction} since $b^N_1(\la) = b^N_1(\mu)$ implies $\la = \mu$.
\end{proof}

\section{Strong factorization property of interpolation Macdonald
  polynomials}
\label{sec:Proof}

In this section we prove \cref{theo:CumulantsMacdonald}. Since its
proof involves many intermediate results which can be considered as
independent of \cref{theo:CumulantsMacdonald}, we believe that presenting them before the proof of the main result might
discourage the reader, and we decided to explain the main idea of the
proof of \cref{theo:CumulantsMacdonald} first, then give the proof
with all the details, and finally present all the remaining proofs of the
intermediate results in the separate sections.

\begin{proof}[Proof of \cref{theo:CumulantsMacdonald}]
We recall that we need to prove that for any positive
integer $r$, and for any partitions $\la^1,\dots,\la^r$ we have the
following bound for the cumulant:
\[ \kumu^{\J}(\la^1,\dots,\la^r) = O_1\left((q-1)^{r-1}\right).\]
The proof will by given by induction on $r$.
The fact that Macdonald interpolation polynomials $\J^{(q,t)}_\la$ have
no singularity in $q=1$ is straightforward from the result
of Sahi presented in \cref{PropDefMacdonald}. That covers the case
$r=1$.

Now, notice that for any ring $R$, and any rational function $f \in
R[q]$, the following conditions are equivalent
\[ f(q) = O_1\left((q-1)^r\right) \iff  f(q^{-1}) =
O_1\left((q-1)^r\right).\]
Thus, we are going to prove that
\[ \kumu^{\J}(\la^1,\dots,\la^r) = O_1\left((q-1)^r\right),\]
where $\kumu^{\J}(\la^1,\dots,\la^r)$ denotes the cumulant with parameters
$q^{-1},t^{-1}$. From now on, until the end of this proof, $\kumu^{\J}(\la^1,\dots,\la^r)$ denotes the cumulant with parameters
$q^{-1},t^{-1}$.

\medskip

Let $R$ be a ring, and let $f \in R[q,q^{-1}]$ be a Laurent polynomial
in $q$. We introduce the
following notation: for any nonnegative integer $k$ the coefficient
$[(q-1)^k]f \in R$ is defined by the following expansion:
\[ q^{\deg(f)}f = \sum_{k \geq 0}\bigg([(q-1)^k]f\bigg)\ (q-1)^k,\]
where $\deg(f)$ is the smallest possible nonnegative integer such that 
\[ q^{\deg(f)}f \in R[q].\]
It is clear that for two Laurent polynomials $f,g \in R[q,q^{-1}]$
and nonnegative integer $k$ one has the following identity:
\[ \left[(q-1)^k\right](fg) = \sum_{0 \leq j \leq k}\bigg(\big[(q-1)^j\big]f\bigg)\cdot \bigg(\big[(q-1)^{k-j}\big]g\bigg).\]
With the above notation, we have to prove that for any integer $0 \leq
k \leq r-2$ the following equality holds true:
\[ f := \left[(q-1)^k\right]\kumu^{\J}(\la^1,\dots,\la^r) = 0.\]
Notice now that the expansion of $f$ into the monomial basis involves only
the monomials $m_\mu$ indexed by partitions $\mu \prec \la^{[r]}$, which
is ensured by \cref{cor:CumulantsTopMonomialExpansion}. Thus, if
we are able to show that the following equation holds true:
\begin{equation} 
\label{eq:DiffEquation}
\sum_{1 \leq i \leq N}A_i(\bm{x};t)(x_i - 1)D^1_i f =
b^N_1(\la^{[r]}) f,
\end{equation}
then $f=0$ by \cref{cor:EigenvectorsVanish}, and the proof is over. So
our goal is to prove \cref{eq:DiffEquation}. In order to
do that we make the following observation: an interpolation
Macdonald polynomial $\J^{(q^{-1},t^{-1})}$ is an eigenfunction of the operator
$D$. Since the cumulant is a linear combination of products of
interpolation Macdonald polynomials
\[ \kumu^{\J}(\la^1,\dots,\la^r) :=
   \sum_{\substack{\pi \in \PPP([r]) } } 
    (-1)^{\#\pi} \prod_{B \in \pi} \J^{(q^{-1},t^{-1})}_{\la^B},\]
 it will be very convenient if the
action of $D$ on such a product will be given by the Leibniz
rule, that is
\begin{multline*} 
D\left(\J^{(q^{-1},t^{-1})}_{\la^1}\cdots
  \J^{(q^{-1},t^{-1})}_{\la^r}\right) \\
= \sum_{1 \leq k \leq r}\J^{(q^{-1},t^{-1})}_{\la^1}\cdots \left(D \J^{(q^{-1},t^{-1})}_{\la^k}
\right) \cdots \J^{(q^{-1},t^{-1})}_{\la^r}.
\end{multline*}
Unfortunately, it is not the case. However, the trick is to
decompose $D\kumu^{\J}(\la^1,\dots,\la^r)$ into two parts: the first
part is given by ``forcing'' the Leibniz rule for the action of $D$ on the
product of interpolation Macdonald polynomials, and the second part is
given by the difference between the proper action of $D$ on cumulant, and
between the forced version. To be more precise
\begin{equation} 
\label{eq:11}
D\kumu^{\J}(\la^1,\dots,\la^r) =
\underbrace{\widetilde{D}\kumu^{\J}(\la^1,\dots,\la^r)}_{\text{first part}} + \underbrace{D\kumu^{\J}(\la^1,\dots,\la^r) -
  \widetilde{D}\kumu^{\J}(\la^1,\dots,\la^r)}_{\text{second part}},
\end{equation}
where
\[ \widetilde{D}\kumu^{\J}(\la^1,\dots,\la^r) := \sum_{\substack{\pi \in \PPP([r]) } } 
    (-1)^{\#\pi} \widetilde{D}\left( \J^{(q^{-1},t^{-1})}_{\la^B}: B \in
      \pi\right),\]
and
\[ \widetilde{D}(f_1,\dots,f_r) := \sum_{1 \leq k \leq r}f_1\cdots \left(D f_k
\right) \cdots f_r.\]
This decomposition turned out to be crucial. Indeed,
\cref{lem:A1SimpleForm} ensures that the first part
can be expressed as a linear combination of products of cumulants of
less then $r$ elements, thus we can use an induction hypothesis to
analyze it. Similarly, \cref{lem:A2SimpleForm} states that the second
part can be given by an expression involving products of cumulants of
less then $r$ elements, and again, an inductive hypothesis can be used
to its analysis. Then, comparing the coefficient of
$(q-1)^k$ in the left hand side of \cref{eq:11} with the coefficient
of $(q-1)^k$ in the right hand side of \cref{eq:11} we obtain
\cref{eq:DiffEquation}. Let us go into details.
Expanding operator $D$ around $q=1$ (see \cref{eq:DAsDifferential}) we have that
\begin{multline} 
\label{eq:1}
\left[(q-1)^k\right]D \kumu^{\J}(\la^1,\dots,\la^r) \\
= \sum_{j \geq 1}\sum_{1 \leq i \leq N}A_i(\bm{x};t)(x^j_i - x^{j-1}_i)D^j_i \left(\left[(q-1)^{k-j}\right]\kumu^{\J}(\la^1,\dots,\la^r) \right).
\end{multline}
Moreover, applying \cref{lem:A1SimpleForm}, we have that
\begin{multline} 
\label{eq:3}
 \left[(q-1)^k\right]\left(\widetilde{D}\kumu^{\J}(\la^1,\dots,\la^r)\right) \\
= \sum_{j \geq 1}\sum_{\substack{\si \in \PPP([r])\\ \#\si\leq j}} \IE_j\left(\la^B: B \in \si\right)  \left[(q-1)^{k-j}\right]\left(\prod_{B \in \si}\kumu^{\J}(\la^i: i
  \in B)\right),
\end{multline}
where $\IE_j\left(\la^B: B \in \si\right)$ is a certain polynomial in
$t$ with integer coefficients (at this stage
we do not need to know its explicit form, but for the interested
Reader it is given by
\cref{eq:InclusionExclusion}) which has the following
form in the special case:
\[ \IE_1(\lambda) = b^N_1(\lambda).\]
Finally, applying \cref{lem:A2SimpleForm}, we obtain the following
identity
\begin{multline} 
\label{eq:2}
 \left[(q-1)^k\right]\left(D \kumu^{\J}(\la^1,\dots,\la^r) -
   \widetilde{D}\kumu^{\J}(\la^1,\dots,\la^r)\right) \\
= -\sum_{j \geq 2}\sum_{1 \leq i \leq N}A_i(\bm{x};t)\frac{x_i^j-x_i^{j-1}}{j!}
\sum_{\substack{\pi \in \PPP([r]), \\ 2 \leq \#\pi \leq j}
}\sum_{\substack{\a \in \N_+^\pi, \\ |\a| = j} } \binom{j}{\a}  \\
\cdot\left[(q-1)^{k-j}\right]\left(\prod_{B \in \pi} D_i^{\a(B)}\kumu^{\J}(\la^b: b
  \in B)\right).
\end{multline}
Here, $\N_+^\pi$ denotes the set of functions $\a: \pi \to \N_+$ with
positive integer values, the symbol $|\a|$ is defined as 
\[ |\a| := \sum_{B \in \pi}\a(B),\] 
and 
\[ \binom{j}{\a} := \frac{j!}{\prod_{B \in \pi}\a(B)!}.\]
We recall that the right hand side (RHS for short) of \cref{eq:1} is equal to the sum
of the right hand sides of \cref{eq:3} and \cref{eq:2}. Let $k=1$. Then
the RHS of \cref{eq:1} is equal to
\[ \sum_{1 \leq i \leq N}A_i(\bm{x};t)(x_i - 1)D^1_i f,\]
the RHS of \cref{eq:3} is equal to 
\[ \IE_1\left(\la^{[r]}\right)f = b^N_1(\la^{[r]})f,\]
where $f = \left[(q-1)^0\right]\kumu^{\J}(\la^1,\dots,\la^r)$, and the RHS
of \cref{eq:2} vanishes. Thus, we have shown that \cref{eq:DiffEquation}
holds true for $f =
\left[(q-1)^0\right]\kumu^{\J}(\la^1,\dots,\la^r)$, which implies that
$f=0$. Now, we fix $K \leq r-2$, and we assume that 
\[[(q-1)^m]\kumu^{\J}(\la^1,\dots,\la^r) = 0\] 
holds true for all $ 0 \leq m < K$. We are going to show that \cref{eq:DiffEquation}
holds true for $f =
\left[(q-1)^K\right]\kumu^{\J}(\la^1,\dots,\la^r)$.
First, note that for $k=K+1$ the RHS of \cref{eq:1} simplifies to
\[ \sum_{1 \leq i \leq N}A_i(\bm{x};t)(x_i - 1)D^1_i f.\]
Moreover, from the induction hypothesis for each subset $I$ with
$\emptyset \subsetneq I \subsetneq [r]$, one has $\kumu^{\J}(\la^i: i
\in I) = O\left((q-1)^{|I|-1}\right)$. Thus, for any set partition $\pi \in \PPP([r])$ which has at least two parts, one has
\[ \prod_{B \in \pi} \left(D^{j_B}_i\kumu^{\J}(\la^b: b\in B) \right) = O\left((q-1)^{r-\#\pi}\right),\]
where the $j_B$ are any nonnegative integers ($D^0_i = \Id$ by
convention).
It implies that the RHS of \cref{eq:2} vanishes.
Finally, again by induction hypothesis, all the elements of the form $\left[(q-1)^{k-j}\right]\left(\prod_{B \in \si}\kumu^{\J}(\la^i: i
  \in B)\right)$ that appear in the RHS of \cref{eq:3} vanish
except $\left[(q-1)^K\right]\kumu^{\J}(\la^1,\dots,\la^r) = f$. Thus
the RHS of \cref{eq:3} simplifies to
\[ \IE_1\left(\la^{[r]}\right)f = b^N_1(\la^{[r]})f,\]
which proves that \cref{eq:DiffEquation} holds true.
The proof is completed.
\end{proof}

\section{Cumulants}
\label{sec:cumulants}

In this section we introduce cumulants and we investigate an action of
derivations on them, which is crucial in the proof of
\cref{theo:CumulantsMacdonald}. We also explain the connection between the strong factorization property and the small cumulant property, and we
present some applications of it relevant for our work. We begin with
some definitions.

\subsection{Partial cumulants}
\begin{definition}    
    Let $(u_I)_{I \subseteq J}$
    be a family of elements in a field,
    indexed by subsets of a finite set $J$.
    Then its {\em partial cumulant} is defined as follows.
        For any non-empty subset $H$ of $J$, set
        \begin{equation}
            \ka_H(\uu) = \sum_{\substack{\pi \in \PPP(H) } } 
            \mu(\pi, \{H\}) \prod_{B \in \pi} u_B,
        \label{EqDefCumulants}
    \end{equation}
\end{definition}
where $\mu$ is the M\"obius function of the set partition lattice; see \cref{SubsecSetPartitions}.

The terminology comes from probability theory.
Let $J = [r]$, and let $X_1,\dots,X_r$ be random variables with finite moments
defined on the same probability space.
Then define $u_I=\mathbb{E}(\prod_{i \in I} X_i)$,
where $\mathbb{E}$ denotes the expected value.
The quantity $\ka_{[\r]}(\uu)$ as defined above,
is known as the {joint (or mixed) cumulant} of the random variables
$X_1,\dots,X_r$.
Also, $\ka_{H}(\uu)$ is the joint/mixed cumulant of the smaller family
$\{X_h, h \in H\}$.

Joint/mixed cumulants have been studied by Leonov and Shiryaev
in \cite{LeonovShiryaev1959} (see also an older note
of Schützenberger \cite{Schutzenberger1947}, where they are introduced
under the French name {\em déviation d'indépendance}).
They now appear 
in random graph theory \cite[Chapter 6]{JansonLuczakRucinski2000}
and have inspired a lot of work
in noncommutative probability theory \cite{Novak_Sniady:What_is_free_cum}.

A classical result -- see, {\em e.g.},
\cite[Proposition 6.16 (vi)]{JansonLuczakRucinski2000} --
is that \cref{EqDefCumulants} can be inverted as follows:
for any non-empty subset $H$ of $J$, 
\begin{equation}
   u_H = \sum_{\substack{\pi \in \PPP(H) } } 
 \prod_{B \in \pi} \ka_B(\uu).
\label{EqMomCum}
\end{equation}

\subsection{Derivations and cumulants}

Let $R$ be a ring. We define an $R$-module of derivations $\Der_K$
which consists of linear maps $D: R \to R$ satisfying the
following Leibniz rule:
\[ D(f\cdot g) = (Df)\cdot g + f \cdot (D g).\]
For any positive integers $r,k$, and for any elements $f_1,\dots,f_r
\in R$ we define
\[ \widetilde{D^k}(f_1,\dots,f_r) := \sum_{1 \leq i \leq r}f_1\cdots \left(D^k f_i
\right) \cdots f_r,\]
Let $K$ be a field, and $D \in \Der_K$ be a derivation. Then, for any family $\uu = (u_I)_{I \subseteq [r]}$
of elements in a field $K$ we define the following deformed action of
$D^k$ on the cumulant:
\[ \widetilde{D^k}\ \kumu_{[r]}(\uu) := \sum_{\substack{\pi \in \PPP([r]) } } 
            \mu(\pi, \{[r]\})\ \widetilde{D^k}(u_B: B \in \pi).\]

The following lemma will be crucial to prove our main result.

\begin{lemma}
\label{lem:zasada}
For any positive integers $r,k$, for any family $\uu = (u_I)_{I \subseteq [r]}$
of elements in a field $K$ and for any derivation $D \in \Der_K$, the following identity holds true:
\begin{equation}
\label{eq:CumulantDifferential}
\widetilde{D^k}\ \kumu_{[r]}(\uu) = \sum_{\substack{\pi \in \PPP([r]), \\ \#\pi \leq k} }\sum_{\substack{\a \in \N_+^\pi, \\ |\a| = k} } \binom{k}{\a} \prod_{B \in \pi} \left(D^{\a(B)}\kumu_{B}(\uu)\right).
\end{equation}
Here, $\N_+^\pi$ denotes the set of functions $\a: \pi \to \N_+$, the symbol $|\a|$ is defined as 
\[ |\a| := \sum_{B \in \pi}\a(B),\] 
and 
\[ \binom{k}{\a} := \frac{k!}{\prod_{B \in \pi}\a(B)!}.\]
\end{lemma}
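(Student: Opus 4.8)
The plan is to expand the right-hand side of \eqref{eq:CumulantDifferential} completely, reindex the resulting sum by \emph{pairs} of set partitions $\rho\le\pi$ of $[r]$, and observe that the inner sum over $\pi$ collapses via Weisner's theorem, leaving exactly the left-hand side. First I would use the definition \eqref{EqDefCumulants} to write $\kumu_B(\uu)=\sum_{\rho_B\in\PPP(B)}\mu(\rho_B,\{B\})\prod_{C\in\rho_B}u_C$, and then expand each $D^{\a(B)}$ acting on the product $\prod_{C\in\rho_B}u_C$ by the genuine (multinomial) Leibniz rule, producing monomials $\prod_{C}D^{\beta_B(C)}u_C$ with $\beta_B\colon\rho_B\to\N_{\ge0}$, $|\beta_B|=\a(B)$. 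For fixed $\pi$, the sub-partitions $(\rho_B)_{B\in\pi}$ assemble into a single partition $\rho\le\pi$ of $[r]$, the product formula \eqref{EqValueMobius} gives $\prod_{B\in\pi}\mu(\{C\in\rho:C\subseteq B\},\{B\})=\mu(\rho,\pi)$, and the functions $(\beta_B)$ assemble into one $\beta\colon\rho\to\N_{\ge0}$ with $|\beta|=k$. One checks that the multinomial coefficients telescope, $\binom{k}{\a}\prod_{B\in\pi}\binom{\a(B)}{(\beta(C))_{C\subseteq B}}=\binom{k}{\beta}$, where now $\a$ is forced by $\a(B)=\sum_{C\in\rho,\,C\subseteq B}\beta(C)$; the constraint $\a\in\N_+^\pi$ becomes the condition that \emph{every block of $\pi$ contains some block of $\supp\beta:=\{C\in\rho:\beta(C)>0\}$} (which automatically forces $\#\pi\le k$, making that restriction redundant).

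\textbf{Collapsing the inner sum.} After reindexing, the right-hand side of \eqref{eq:CumulantDifferential} becomes
\[
\sum_{\rho\in\PPP([r])}\ \sum_{\substack{\beta\colon\rho\to\N_{\ge0}\\ |\beta|=k}}\binom{k}{\beta}\Bigl(\prod_{C\in\rho}D^{\beta(C)}u_C\Bigr)\Bigl(\sum_{\substack{\pi\ge\rho:\ \text{every block of }\pi\\ \text{contains a block of }\supp\beta}}\mu(\rho,\pi)\Bigr).
\]
Transporting the inner sum to the interval $[\rho,\{[r]\}]\cong\PPP(\rho)$, it equals $\sum_{\bar\pi}\mu_{\PPP(\rho)}(\hat 0,\bar\pi)$ over those $\bar\pi$ all of whose blocks meet the subset $\supp\beta\subseteq\rho$. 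Here I would use \cref{prop:Weisner}: taking $\tau$ to be the partition of $\rho$ whose only non-singleton block is $\supp\beta$, one sees that $\{\bar\pi:\text{every block of }\bar\pi\text{ meets }\supp\beta\}=\{\bar\pi:\bar\pi\vee\tau=\{[r]\}\}$ (the top of $\PPP(\rho)$). If $|\supp\beta|\ge2$ then $\tau>\hat 0$, so Weisner's theorem makes this sum vanish. If $|\supp\beta|=1$, say $\supp\beta=\{C_0\}$, the condition forces $\bar\pi$ to be the top partition, the sum is $\mu(\hat 0,\hat 1)=(-1)^{|\rho|-1}(|\rho|-1)!$, and moreover $\beta(C_0)=k$, so $\binom{k}{\beta}=1$ and the monomial is $(D^ku_{C_0})\prod_{C\in\rho,\,C\ne C_0}u_C$. (The case $\supp\beta=\emptyset$ cannot occur since $k\ge1$.)

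\textbf{Conclusion.} Only the terms with $|\supp\beta|=1$ survive, so the right-hand side of \eqref{eq:CumulantDifferential} equals
\[
\sum_{\rho\in\PPP([r])}(-1)^{|\rho|-1}(|\rho|-1)!\ \sum_{C_0\in\rho}(D^ku_{C_0})\prod_{C\in\rho,\,C\ne C_0}u_C .
\]
On the other hand, expanding the definition of $\widetilde{D^k}$ directly gives
$\widetilde{D^k}\,\kumu_{[r]}(\uu)=\sum_{\pi\in\PPP([r])}\mu(\pi,\{[r]\})\sum_{B_0\in\pi}(D^ku_{B_0})\prod_{B\in\pi,\,B\ne B_0}u_B$, and $\mu(\pi,\{[r]\})=(-1)^{\#\pi-1}(\#\pi-1)!$; this is literally the same expression (relabel $\rho\mapsto\pi$, $C_0\mapsto B_0$), so the two sides of \eqref{eq:CumulantDifferential} coincide.

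\textbf{Main obstacle.} The routine part is the bookkeeping in the first paragraph — merging sub-partitions, verifying the multinomial coefficients telescope, and correctly translating the $\a\in\N_+^\pi$ constraint into a condition on $\pi$ relative to $\supp\beta$. The conceptual crux, and the step I expect to require the most care, is recognizing that the resulting inner Möbius sum over $\pi\ge\rho$ has exactly the shape $\sum_{\omega:\omega\vee\tau=\hat1}\mu(\hat0,\omega)$ governed by Weisner's theorem; this is what forces every contribution with $|\supp\beta|\ge2$ to cancel and collapses the double sum over partitions back to the single sum appearing in $\widetilde{D^k}\,\kumu_{[r]}(\uu)$.
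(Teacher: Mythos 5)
Your proposal is correct and follows essentially the same route as the paper's proof: both expand via the multinomial Leibniz rule and the product formula \eqref{EqValueMobius}, translate the positivity constraint on $\a$ into the join condition $\si\vee\tau=\{[r]\}$ for the same auxiliary partition $\tau$ (the finer partition with its support blocks merged), and kill all terms with $\#\supp\ge 2$ by Weisner's theorem. The only difference is presentational — you expand the right-hand side fully and regroup, while the paper extracts the coefficient of each monomial $\prod_{B\in\pi}D^{\a(B)}u_B$ on both sides — but the underlying computation is identical.
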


\begin{proof}
First of all, notice that for any elements $f_1,\dots,f_r \in K$, and for any positive integer $k$ the following generalized Leibniz rule holds true:
\begin{equation} 
\label{eq:GeneralLeibniz}
D^k(f_1\cdots f_r) = \sum_{\substack{\a \in \N^{[r]}, \\ |\a| = k} } \binom{k}{\a}\left(D^{\a(1)}f_1\right)\cdots\left(D^{\a(r)}f_r\right),
\end{equation}
which is easy to prove by induction ($D^0 := \Id$ by convention).

Notice now that the both hands of \cref{eq:CumulantDifferential} are linear combinations of elements of the form
\[ \prod_{B \in \pi}D^{\a(B)}u_B,\]
where $\pi \in \PPP([r])$, and $\a \in \N^\pi$ is a composition of $k$.
Let us call $\RHS$ the right-hand side of \cref{eq:CumulantDifferential}, and analogously $\LHS$ the left-hand side of \cref{eq:CumulantDifferential}. Let us fix a set partition $\pi \in \PPP([r])$, and a composition $\a \in \N^\pi$ of $k$. We would like to show that
\[ \left[\prod_{B \in \pi}D^{\a(B)}u_B\right] \LHS = \left[\prod_{B \in \pi}D^{\a(B)}u_B\right] \RHS.\]

We define the support $\supp(\a)$ of $\a$ in a standard way:
\[ \supp(\a) := \{B \in \pi: \a(B) \neq 0\}.\] 
Then, it is clear from definition of $\widetilde{D^k}\
\kumu_{[r]}(\uu)$ that
\begin{equation}
\label{eq:LHS}
\left[\prod_{B \in \pi}D^{\a(B)}u_B\right] \LHS = \begin{cases}\mu(\pi,\{[r]\}) &\text{ if } \#\supp(\a) = 1,\\ 0 &\text{ otherwise. }\end{cases}
\end{equation}

We now analyze the coefficient
\[ \left[\prod_{B \in \pi}D^{\a(B)}u_B\right] \RHS.\]
We can see that the nonzero contribution come from the elements of the following form:
\[ \prod_{B' \in \si} D^{\a'(B')}\kumu_{B'}(\uu), \]
where 
\[\a'(B') := \sum_{\substack{B \in \pi:\\ B \subset B'}}\a(B),\]
$\si \geq \pi$, and for each element $B' \in \si$ there exists an element $B \in \supp(\a)$ such that $B\subset B'$.
In other terms, $\si$ is a partition which has the property that $\si \geq \pi$, and
\[ \si \vee \tau = \{[r]\},\]
where partition $\tau$ is constructed from $\pi$ by merging all its blocks lying in a support of $\a$, i.e.~:
\begin{equation} 
\label{eq:tau}
\tau := \left\{\bigcup\supp(\a)\right\}\cup\left(\pi\setminus\supp(\a)\right).
\end{equation}

Using the definition of cumulants \cref{EqDefCumulants}, and \cref{eq:GeneralLeibniz}, we can compute the coefficient
\begin{multline*} 
\left[\prod_{B \in \pi}D^{\a(B)}u_B\right]\prod_{B' \in \si} D^{\a'(B')}\kumu_{B'}(\uu) \\
= \prod_{B' \in \si}\binom{\a'(B')}{\a(B): B \in \pi, B \subset B'}\mu\left(\{B \in \pi: B \subset B'\}, \{B'\}\right).
\end{multline*}
Plugging it into \cref{eq:CumulantDifferential}, we obtain that
\begin{multline*} 
\left[\prod_{B \in \pi}D^{\a(B)}u_B\right] \RHS = \sum_{\substack{\si \geq \pi,\\ \si \vee \tau = \{[r]\}}}\binom{k}{\a'(B'): B' \in \si}\left[\prod_{B \in \pi}D^{\a(B)}u_B\right]\prod_{B' \in \si} D^{\a'(B')}\kumu_{B'}(\uu)\\
= \binom{k}{\a}\sum_{\substack{\si \in \PPP([r]),\\ \si \vee \tau = \{[r]\}}}\prod_{B' \in \si}\mu\left(\{B \in \pi: B \subset B'\}, \{B'\}\right) = \binom{k}{\a}\sum_{\substack{\si \geq \pi,\\ \si \vee \tau = \{[r]\}}}\mu(\pi,\si),
\end{multline*}
where $\tau$ is the partition given by \cref{eq:tau}. Here, the last equality is a consequence of \cref{EqValueMobius} for the M\"obius function $\mu(\pi,\si)$.
Now, notice that partition $\tau$ is constructed in a way that $\tau
\geq \pi$, and the inequality is strict whenever $\#\supp(\a)>1$. Thus, we can apply \cref{prop:Weisner} to get
\[ \left[\prod_{B \in \pi}D^{\a(B)}u_B\right] \RHS = \begin{cases}\binom{k}{\a}\sum_{\substack{\si \geq \pi,\\ \si \vee \tau = \{[r]\}}}\mu(\pi,\si) &\text{ if } \#\supp(\a) = 1,\\ 0 &\text{ otherwise. }\end{cases}\]
But if $\#\supp(\a) = 1$ then $\binom{k}{\a} = 1$, and $\tau = \pi$,
thus $\si \vee \tau = \si$ (since $\si \geq \pi = \tau$). So condition $\si
\vee \tau = \{[r]\}$ implies that $\si = \{[r]\}$, which gives that
\[ \left[\prod_{B \in \pi}D^{\a(B)}u_B\right] \RHS = \begin{cases} \mu(\pi,\{[r]\}) &\text{ if } \#\supp(\a) = 1,\\ 0 &\text{ otherwise. }\end{cases}.\]
Comparing it with \cref{eq:LHS}, we can see that
\[ \left[\prod_{B \in \pi}D^{\a(B)}u_B\right] \LHS = \left[\prod_{B \in \pi}D^{\a(B)}u_B\right] \RHS,\]
which finishes the proof.
\end{proof}

\subsection{A multiplicative criterion for small cumulants}



Let $R$ be a ring and $q$ a formal parameter. We consider a family $\uu=(u_I)_{I\subseteq [\r]}$ 
of elements of $R(q)$ indexed by subsets of $[\r]$.
Throughout this section, we also assume that these elements are {\bf non-zero}
and $u_\emptyset=1$.

In addition to partial cumulants,
we also define the {\em cumulative factorization error terms} $T_H(\uu)$
of the family $\uu$.
The quantities $T_H(\uu)_{H \subseteq [\r],|H| \ge 2}$  are
inductively defined as follows:
for any subset $G$ of $[\r]$ of size at least $2$,
\begin{equation*}
u_G = \prod_{g \in G} u_{\{g\}} \cdot  
\prod_{H \subseteq G \atop |H| \ge 2} (1 + T_H(\uu)).
\end{equation*}
Using the inclusion-exclusion principle, 
a direct equivalent definition is the following:
for any subset $H$ of $[\r]$ of size at least 2, set
\begin{equation}
T_H(\uu) = \left(\prod_{G \subseteq H} u_G^{(-1)^{|H|-|G|}}\right) - 1.
\label{EqDefTDirect}
\end{equation}

F\'eray (using a different
framework) \cite{Feray2013} proved the following statement, which was
reproved in our recent joint paper with F\'eray \cite[Proposition 2.3]{DolegaFeray2016} using the framework of the current paper:
\begin{proposition}
    \label{PropEquivalenceSCQF}
    The following statements are equivalent:
    \begin{enumerate}[label=\Roman*]
        \item \hspace{-.2cm}. \label{ItemQuasiFactorization}
            {\em Strong factorization property when $q=r$:}
            for any subset $H \subseteq [\r]$ of size at least $2$, one has
            \begin{equation*}
                T_{H}(\uu)
                = O_r\left((q-r)^{|H|-1}\right).
            \end{equation*}
        \item \hspace{-.2cm}. \label{ItemSmallCumulants}
            {\em Small cumulant property when $q=r$:}
            for any subset $H \subseteq [\r]$ of size at least $2$, one has
            \begin{equation*}
                \ka_H(\uu) = \left( \prod_{h \in H} u_h \right) O_r\left((q-r)^{|H|-1}\right). 
            \end{equation*}
    \end{enumerate}
\end{proposition}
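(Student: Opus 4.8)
The plan is to prove the two statements equivalent by setting up a clean dictionary between the cumulative factorization error terms $T_H(\uu)$ and the partial cumulants $\ka_H(\uu)$, exploiting the fact that both are obtained from the family $\uu$ by a ``logarithmic'' transform. First I would normalize: since all $u_I$ are nonzero and $u_\emptyset = 1$, I may divide through and assume without loss of generality that $u_{\{i\}} = 1$ for all $i \in [\r]$; indeed, replacing $u_I$ by $u_I / \prod_{i\in I} u_i$ changes neither $T_H(\uu)$ (by \cref{EqDefTDirect}, the singleton factors cancel in the alternating product) nor the validity of statement \ref{ItemSmallCumulants} (the prefactor $\prod_{h\in H} u_h$ is divided out on both sides), while for statement \ref{ItemSmallCumulants} one checks that $\ka_H$ of the normalized family equals $\ka_H(\uu) / \prod_{h\in H} u_h$ up to reorganizing the product over blocks. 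After this reduction, \ref{ItemQuasiFactorization} reads $T_H(\uu) = O_r((q-r)^{|H|-1})$ and \ref{ItemSmallCumulants} reads $\ka_H(\uu) = O_r((q-r)^{|H|-1})$, so the claim becomes: $T_H = O_r((q-r)^{|H|-1})$ for all $H$ with $|H|\ge 2$ if and only if $\ka_H = O_r((q-r)^{|H|-1})$ for all such $H$.

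Next I would record the two inversion formulas relating $\uu$, $T$, and $\ka$. From the inductive definition, $u_G = \prod_{H\subseteq G,\, |H|\ge 2}(1 + T_H)$, and from \cref{EqMomCum}, $u_G = \sum_{\pi\in\PPP(G)} \prod_{B\in\pi} \ka_B$ (with the convention that singleton blocks contribute $\ka_{\{i\}} = u_{\{i\}} = 1$ in the normalized family, so effectively $u_G = \sum_{\pi} \prod_{B\in\pi,\,|B|\ge 2}\ka_B$). Taking logarithms (formally, in $R(q)$ localized away from the zero locus, which is harmless since we only care about order of vanishing at $q=r$), $\log u_G = \sum_{H\subseteq G,\,|H|\ge 2}\log(1+T_H)$, so by Möbius inversion on the Boolean lattice $\log(1+T_H) = \sum_{G\subseteq H}(-1)^{|H|-|G|}\log u_G$. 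I would then prove both implications by strong induction on $|H|$. For \ref{ItemQuasiFactorization} $\Rightarrow$ \ref{ItemSmallCumulants}: assuming $T_{H'} = O_r((q-r)^{|H'|-1})$ for all $H'$, one has $\log(1+T_H) = O_r((q-r)^{|H|-1})$ as well (the log of $1 + O_r(\epsilon^m)$ is $O_r(\epsilon^m)$), hence $\log u_G = O_r(q-r)$ for every $G$ with $|G|\ge 2$ and more precisely the degree-$d$ part in $(q-r)$ of $\log u_G$ comes only from partitions... — rather, cleaner: expand $u_G = 1 + \sum_{H\subseteq G,\,|H|\ge 2}T_H + (\text{products of } T_H\text{'s})$, observe a product $T_{H_1}\cdots T_{H_s}$ of terms over disjoint-ish sets has order $\ge \sum(|H_i|-1)$, and match against $u_G = \sum_\pi \prod_{B}\ka_B$ to read off, by induction, that the lowest-order contribution to $\ka_G$ is controlled. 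The converse \ref{ItemSmallCumulants} $\Rightarrow$ \ref{ItemQuasiFactorization} runs symmetrically using $1 + T_H = \prod_{G\subseteq H} u_G^{(-1)^{|H|-|G|}}$ and the expansion of each $u_G$ via cumulants.

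The cleanest route, which I would actually pursue, is to avoid case-by-case induction and instead work with a single generating-function / exponential-formula identity: in the ring $R(q)[[z_1,\dots,z_\r]]$ (or on the Boolean lattice directly), both $\{T_H\}$ and $\{\ka_H\}$ are the ``connected'' (logarithmic) versions of the same family $\{u_G\}$ — $\ka$ via the set-partition (exponential-formula) logarithm and $1+T$ via the subset (Boolean Möbius) logarithm — and one shows abstractly that $\ka_H$ and $\log(1+T_H) = \sum_{G\subseteq H}(-1)^{|H|-|G|}\log u_G$ differ by a triangular change of basis supported on the ``connected'' part, so that $\ka_H = \log(1+T_H) + (\text{polynomial in the }\ka_{H'}, T_{H'}\text{ with }|H'| < |H|)$, and the correction terms are products of lower cumulants/errors over a nontrivial partition of $H$, hence automatically of order $\ge |H|-1$ under either hypothesis. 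I expect the main obstacle to be precisely this bookkeeping step: verifying that the ``cross terms'' relating the set-partition logarithm to the Boolean logarithm are genuinely products over nontrivial set partitions of $H$ (so that each factor has strictly smaller ground set and the inductive order estimate $\sum_{B}(|B|-1) \ge |H| - \#\pi \ge |H|-1$... — careful, one needs $\ge |H|-1$, which holds iff $\#\pi \le \#\{$nonsingleton blocks$\}+\dots$, forcing attention to how singletons are handled), and in checking that taking logarithms does not lose or gain order at $q = r$. Since this proposition is quoted from \cite{Feray2013} and \cite[Proposition 2.3]{DolegaFeray2016}, I would in practice cite that proof; the sketch above is how I would reconstruct it from scratch.
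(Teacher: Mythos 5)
First, a point of comparison: the paper offers no proof of \cref{PropEquivalenceSCQF} at all --- it is imported from \cite{Feray2013} and \cite[Proposition 2.3]{DolegaFeray2016}, with only the remark that the general-$r$ case follows from the case $r=0$ by substituting $q\mapsto q-r$. So your closing decision to cite those references is literally what the paper does. Your normalization step is also correct and worth keeping: replacing $u_I$ by $u_I/\prod_{i\in I}u_i$ leaves each $T_H$ unchanged (the net exponent of $u_i$ in \cref{EqDefTDirect} is $\sum_{G:\,i\in G\subseteq H}(-1)^{|H|-|G|}=0$ for $|H|\ge 2$) and divides $\ka_H$ by $\prod_{h\in H}u_h$, so both statements reduce to their unweighted forms.

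The reconstruction you sketch does not yet close, though, and the gap is exactly where you flag it. The load-bearing claim in your last paragraph --- that the correction terms are ``products of lower cumulants/errors over a nontrivial partition of $H$, hence automatically of order $\ge |H|-1$'' --- is false for partitions: a partition $\pi$ of $H$ contributes order $\sum_{B\in\pi}(|B|-1)=|H|-\#\pi$, which is at least $|H|-1$ only when $\pi=\{H\}$, so a two-block partition already gives only $|H|-2$. Your own parenthetical ``careful, one needs $\ge|H|-1$\dots'' is detecting a genuine failure, not a bookkeeping nuisance. The fix is that the correct index set is not partitions but \emph{connected covering collections} of overlapping subsets: for the normalized family one has, for every $H$ with $|H|\ge 2$,
\[
\ka_H(\uu)=\sum_{S}\ \prod_{H'\in S}T_{H'}(\uu),
\]
where $S$ runs over collections of pairwise distinct subsets of $H$, each of size at least $2$, whose union is $H$ and which form a connected hypergraph on $H$. (To prove it, substitute $u_B=\prod_{H'\subseteq B,\ |H'|\ge 2}\bigl(1+T_{H'}(\uu)\bigr)$ into \cref{EqDefCumulants}, expand, and use $\sum_{\pi\ge\sigma}\mu(\pi,\{H\})=\delta_{\sigma,\{H\}}$ to kill every collection that fails to connect $H$.) For connected covering $S$ one \emph{does} have $\sum_{H'\in S}(|H'|-1)\ge|H|-1$, since each new edge attaches at most $|H'|-1$ new vertices; this gives (I)$\Rightarrow$(II) at once, and isolating the term $S=\{H\}$ gives $\ka_H=T_H\cdot\bigl(1+O_r(q-r)\bigr)+O_r\bigl((q-r)^{|H|-1}\bigr)$ by induction on $|H|$, whence (II)$\Rightarrow$(I). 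This identity also stays inside $R(q)$ and so avoids the formal logarithms, which would require $\QQ\subseteq R$ and a separate verification that each $u_G$ equals $1+O_r(q-r)$ before they even make sense.
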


\begin{remark}
In fact, above proposition was proved in the case $r=0$, but it is
enough to shift indeterminate $q \mapsto q-r$ to obtain the general result.
\end{remark}

A first consequence of this multiplicative criterion for small cumulants
is the following stability result.
\begin{corollary}
    Consider two families $(u_I)_{I \subseteq [\r]}$
    and $(v_I)_{I \subseteq [\r]}$ with the small cumulant property when $q \to r$.
    Then their entry-wise product $(u_I v_I)_{I \subseteq [\r]}$ and quotient
    $(u_I/v_I)_{I \subseteq [\r]}$ also have the small cumulant
    property when $q \to r$.
    \label{corol:stable_product}
\end{corollary}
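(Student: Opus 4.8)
The plan is to deduce this stability statement directly from the multiplicative criterion of \cref{PropEquivalenceSCQF} rather than working with cumulants themselves. By that proposition, saying that $(u_I)_{I\subseteq[\r]}$ has the small cumulant property when $q\to r$ is equivalent to saying that the cumulative factorization error terms satisfy $T_H(\uu)=O_r\left((q-r)^{|H|-1}\right)$ for every $H$ of size at least $2$; likewise for $(v_I)$. So the first thing I would do is rewrite everything in terms of the $T_H$'s, since those transform multiplicatively in a transparent way, whereas the cumulants do not.

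The key computation is then to understand how $T_H$ behaves under entry-wise products and quotients. From the direct definition \cref{EqDefTDirect}, $1+T_H(\uu)=\prod_{G\subseteq H}u_G^{(-1)^{|H|-|G|}}$, and since this is a product over $G\subseteq H$ with exponents $\pm1$, one immediately gets
\[ 1+T_H(\uu\cdot\vv) = \big(1+T_H(\uu)\big)\big(1+T_H(\vv)\big), \qquad 1+T_H(\uu/\vv) = \frac{1+T_H(\uu)}{1+T_H(\vv)}, \]
where $\uu\cdot\vv$ denotes the entry-wise product $(u_Iv_I)$ and $\uu/\vv$ the entry-wise quotient. (Here I use that all the $v_I$ are non-zero, which is part of the standing assumption of the section, so the quotient family is well-defined and its entries are non-zero.) Hence $T_H(\uu\cdot\vv)=T_H(\uu)+T_H(\vv)+T_H(\uu)T_H(\vv)$ and $T_H(\uu/\vv)=\big(T_H(\uu)-T_H(\vv)\big)/\big(1+T_H(\vv)\big)$.

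Now I would check the order estimate. If $T_H(\uu)=O_r((q-r)^{|H|-1})$ and $T_H(\vv)=O_r((q-r)^{|H|-1})$, then the sum $T_H(\uu)+T_H(\vv)$ is $O_r((q-r)^{|H|-1})$ and the product $T_H(\uu)T_H(\vv)$ is $O_r((q-r)^{2(|H|-1)})$, which is even better since $|H|\ge 2$; so $T_H(\uu\cdot\vv)=O_r((q-r)^{|H|-1})$. For the quotient, the numerator $T_H(\uu)-T_H(\vv)$ is $O_r((q-r)^{|H|-1})$, and the denominator $1+T_H(\vv)$ equals $1$ at $q=r$ (since $T_H(\vv)$ vanishes there, indeed to order at least $1$), hence has no pole and no zero at $q=r$, so dividing by it preserves the order estimate; thus $T_H(\uu/\vv)=O_r((q-r)^{|H|-1})$. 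Applying \cref{PropEquivalenceSCQF} once more, in the reverse direction, to the families $\uu\cdot\vv$ and $\uu/\vv$ converts these back into the small cumulant property, which is exactly the claim.

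I do not expect any genuine obstacle here: the whole point of \cref{PropEquivalenceSCQF} is to linearize/multiplicativize the small cumulant property, and in the $T_H$ picture the statement is nearly a tautology. The only mildly delicate point is the bookkeeping around the denominator in the quotient case — one must note that $1+T_H(\vv)$ is a unit in the local ring at $q=r$ because $T_H(\vv)$ itself vanishes at $q=r$ — but this is routine. One should also remark in passing that $u_\emptyset v_\emptyset = 1$ and $u_\emptyset/v_\emptyset=1$, so the normalization hypothesis of the section is preserved and \cref{PropEquivalenceSCQF} genuinely applies to the new families.
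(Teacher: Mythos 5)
Your proposal is correct and follows exactly the route the paper takes: the paper's proof simply observes that the statement is trivial for the strong factorization property (i.e.\ in terms of the $T_H$'s, which are multiplicative under entry-wise products and quotients) and invokes the equivalence of \cref{PropEquivalenceSCQF}. You have merely spelled out the multiplicativity $1+T_H(\uu\cdot\vv)=\bigl(1+T_H(\uu)\bigr)\bigl(1+T_H(\vv)\bigr)$ and the order bookkeeping that the paper leaves implicit.
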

\begin{proof}
    This is trivial for the strong factorization property and
    the small cumulant property is equivalent to it.
\end{proof}

Here is another consequence:

\begin{corollary}
\cref{ThmSFJack} is equivalent to \cref{conj:Macdonald}.
\end{corollary}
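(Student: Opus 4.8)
The plan is to deduce this corollary directly from \cref{PropEquivalenceSCQF}, the multiplicative criterion for small cumulants, applied to the family $\uu = (u_I)_{I \subseteq [r]}$ defined by $u_I := J^{(q,t)}_{\la^I}$ for $\emptyset \neq I \subseteq [r]$ and $u_\emptyset := 1$. Before invoking the proposition one must check its running hypotheses: the elements $J^{(q,t)}_{\la^I}$ are non-zero in $\Symm_N \otimes \QQ(q,t)$ (indeed they are non-zero symmetric polynomials, by the monomial expansion (C2), so they are invertible in the fraction field), and $u_\emptyset = 1$ by fiat. One subtlety: \cref{PropEquivalenceSCQF} is stated for families of elements of $R(q)$, so I would take $R = \Symm_N$ (or the ring of symmetric functions) with $q$ the Macdonald parameter, treating $t$ as belonging to the coefficient ring; the statement is then about behaviour at $q = r = 1$, which is exactly the regime of both \cref{ThmSFJack} and \cref{conj:Macdonald}.

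With this dictionary in place, I would observe that the cumulative factorization error term specializes to exactly the left-hand side of \cref{EqSFMacdo}: by \cref{EqDefTDirect}, for $H = [r]$,
\[
T_{[r]}(\uu) = \left(\prod_{G \subseteq [r]} \left(J^{(q,t)}_{\la^G}\right)^{(-1)^{r-|G|}}\right) - 1 = \left(\prod_{I \subseteq [r]} \left(J^{(q,t)}_{\la^I}\right)^{(-1)^{|I|}}\right)^{(-1)^r} - 1,
\]
so that $T_{[r]}(\uu) = O_1((q-1)^{r-1})$ is equivalent to \cref{EqSFMacdo} for the tuple $(\la^1,\dots,\la^r)$ (raising $1 + O_1((q-1)^{r-1})$ to the power $\pm 1$ preserves the property). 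Meanwhile the partial cumulant $\ka_{[r]}(\uu)$ is, by \cref{EqDefCumulants} and the product form of the M\"obius function, precisely $\kumu^J(\la^1,\dots,\la^r)$ as defined in the paragraph following \cref{ThmSFJack}; and since each $J^{(q,t)}_{\la^i}$ is a non-zero element with no pole and no zero at $q=1$ (its value at $q=1$ is the explicit non-zero product from \cite[Chapter VI, Remark (8.4)]{Macdonald1995}), the factor $\prod_{i} u_{\{i\}}$ appearing in \cref{ItemSmallCumulants} is a unit near $q=1$ and can be absorbed. Hence $\ka_{[r]}(\uu) = O_1((q-1)^{r-1})$ is equivalent to the small cumulant property $\kumu^J(\la^1,\dots,\la^r) = O_1((q-1)^{r-1})$, i.e.\ to \cref{conj:Macdonald} for this tuple.

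Finally I would note that \cref{ThmSFJack} and \cref{conj:Macdonald} are each a universally quantified statement over all $r$ and all tuples $(\la^1,\dots,\la^r)$; quantifying the equivalence just established over the same data, and applying \cref{PropEquivalenceSCQF} (statements \ref{ItemQuasiFactorization} $\Leftrightarrow$ \ref{ItemSmallCumulants}) for each fixed tuple, yields the equivalence of the two theorems. Strictly speaking \cref{PropEquivalenceSCQF} asserts the strong factorization / small cumulant property for \emph{all} subsets $H \subseteq [r]$ simultaneously, not just $H = [r]$; but that is harmless, since the collection of all statements ``$T_H(\uu) = O_1((q-1)^{|H|-1})$ for all $H$ and all tuples'' is the same collection as ``$T_{[s]} = O_1((q-1)^{s-1})$ for all $s$ and all tuples'' (take $H = [s]$ after relabelling), and likewise for cumulants.

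The proof is essentially bookkeeping: the only place one has to be a little careful is the verification that the hypotheses of \cref{PropEquivalenceSCQF} hold --- non-vanishing of Macdonald polynomials and, more importantly, the fact that $J^{(q,t)}_{\la^i}$ has neither a pole nor a zero at $q=1$, which is what allows the normalizing factor $\prod_h u_h$ in \cref{ItemSmallCumulants} to be dropped when passing to the formulation of \cref{conj:Macdonald}. This is the main (minor) obstacle; everything else is a direct translation through \cref{EqDefCumulants} and \cref{EqDefTDirect}.
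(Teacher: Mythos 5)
Your proposal is correct and follows essentially the same route as the paper: define $u_I := J^{(q,t)}_{\la^I}$, note that by \cite[Chapter VI, Remark (8.4)-(iii)]{Macdonald1995} each $J^{(q,t)}_{\la^I}$ has a non-zero limit at $q=1$ so that $u_I$ and $u_I^{-1}$ are $O_1(1)$, and invoke \cref{PropEquivalenceSCQF}. The paper's proof is just a terser version of your bookkeeping; the identification of $T_{[r]}(\uu)$ with the left-hand side of \cref{EqSFMacdo} and the absorption of the unit factor $\prod_h u_h$ are exactly the points it leaves implicit.
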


\begin{proof}
Let us fix a positive integer $r$, and partitions
$\la^1,\dots,\la^r$. For any subset $I \subset [r]$, define $u_I :=
J^{(q,t)}_{\la^I}$. Then, \cite[Chapter VI, Remark (8.4)-(iii)]{Macdonald1995} states that Macdonald polynomial $J^{(1,t)}_{\la^I}$ at $q = 1$ has non-zero
limit, thus $u_I, \left(u_I\right)^{-1} = O_1(1)$, and the statement
is an immediate consequence of \cref{PropEquivalenceSCQF}.
\end{proof}
\bigskip

\subsection{Hook cumulants}
We use the multiplicative criterion above to prove that families constructed from the hook polynomial defined by
\cref{eq:HookProduct} have the small cumulant properties at
$q=1$. This result is an important ingredient in the proof of the main result.

\begin{lemma}
Fix a positive integer $r$ and a subset $K$ of $[r]$.
Let $c \in \N$ and $(c_i)_{i \in K}$ be a family of some nonnegative integers, and let $C \neq 1 \in R$.
For a subset $I$ of $K$, we define
\[ v_I=1-C\cdot q^{c + \sum_{i \in I}c_i}\]
Then we have, for any subset $H$ of $K$,
\[ T_H(\vv) = O_1\left((q-1)^{|H|}\right).\]
\label{LemUsefulExample}
\end{lemma}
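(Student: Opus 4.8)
The strategy is to reduce the statement to the multiplicative criterion of \cref{PropEquivalenceSCQF} and then to an explicit cumulant computation on a very simple family. First note that each $v_I = 1 - C q^{c+\sum_{i\in I}c_i}$ is a nonzero element of $R(q)$ (as a polynomial in $q$ it is not identically zero, since $C\neq 1$ forces $v_I \neq 0$ at $q=1$, and anyway it is a genuine polynomial), and $v_\emptyset = 1 - C q^c$ which has a nonzero value at $q=1$ because $C \neq 1$; hence $v_\emptyset, (v_\emptyset)^{-1} = O_1(1)$. Actually the cleanest route is to renormalize: the quantities $T_H(\vv)$ defined by \cref{EqDefTDirect} are unchanged if we replace the indexing set and work with the subfamily indexed by subsets of $K$, and by \cref{PropEquivalenceSCQF} it suffices to prove the \emph{small cumulant property} $\ka_H(\vv) = \left(\prod_{h\in H} v_{\{h\}}\right) O_1\left((q-1)^{|H|-1}\right)$ for every $H \subseteq K$ with $|H| \ge 2$; in fact we aim for the sharper bound $\ka_H(\vv) = O_1\left((q-1)^{|H|}\right)$, which combined with the criterion yields $T_H(\vv) = O_1\left((q-1)^{|H|}\right)$, one power better than the generic strong factorization exponent $|H|-1$.

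The key observation is a multiplicativity/substitution trick. Write $q = 1 + x$ and set $y_i := q^{c_i} - 1 = O_1(x)$ for $i \in K$, and $\beta := C q^c$. Then
\[
v_I = 1 - \beta \prod_{i\in I}(1+y_i) = 1 - \beta \sum_{S \subseteq I}\prod_{i\in S} y_i .
\]
Viewing $v_I$ as a function of the family $(y_i)_{i\in K}$, expand $\prod_{i\in S} y_i$ multilinearly. The crucial point is that $v_I$, as a function of the "moment-like" data, is an \emph{affine} (degree $\le 1$ in each variable) expression whose only genuinely multilinear interaction terms carry a factor $\prod_{i\in S} y_i$. One then computes the cumulant $\ka_H(\vv)$ directly from \cref{EqDefCumulants}:
\[
\ka_H(\vv) = \sum_{\pi \in \PPP(H)} \mu(\pi,\{H\}) \prod_{B\in\pi} v_B .
\]
Substituting $v_B = 1 - \beta\sum_{S\subseteq B}\prod_{i\in S}y_i$ and expanding the product over blocks, one sorts the resulting terms by which subset $T = \bigsqcup_B S_B \subseteq H$ of "active" indices they involve. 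For a fixed such $T$, a term has at least $|T|$ factors among the $y_i$'s already, so it is $O_1(x^{|T|})$; the terms with $|T| \ge |H|$ are harmless. It remains to show the contributions with $|T| < |H|$ cancel. This cancellation is exactly a Möbius/Weisner-type identity: for a fixed choice of how the active indices are distributed among blocks, the remaining sum over the coarsening structure of $\pi$ on the inactive indices is a sum of the form $\sum_{\pi}\mu(\pi,\{H\})(\text{const})$ over an interval in $\PPP(H)$ that is not the full interval $[\hat 0,\hat 1]$, hence vanishes by the defining property of the Möbius function (or by \cref{prop:Weisner}). Concretely, an index $h \in H$ not appearing in any active set $S_B$ contributes a factor that does not "see" $h$, and summing $\mu(\pi,\{H\})$ over all $\pi$ refining some fixed partition that keeps $h$ isolated — or more precisely over all $\pi$ with $\pi \vee \tau \neq \{H\}$ for the appropriate $\tau$ — gives zero.

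\textbf{Main obstacle.} The bookkeeping in the expansion is where the real work lies: one must set up the sum over $\pi\in\PPP(H)$, over assignments $B \mapsto S_B \subseteq B$ of active index-sets, and over the residual block structure, and then identify precisely which Möbius sum appears so that \cref{prop:Weisner} (Weisner's theorem) applies to kill every term with fewer than $|H|$ active indices. A clean way to organize this — and the way I would actually write it — is to prove it by induction on $|H|$ using the criterion itself together with \cref{corol:stable_product}: one can often factor or telescope the family $\vv$ so that the $|H|$-element case follows from smaller cases plus a direct check of the "top" term. Alternatively, and perhaps most efficiently, one recognizes that the map $(y_i) \mapsto v_I$ is built from the elementary building block $I \mapsto \prod_{i\in I}(1+y_i)$ (which has \emph{exact} factorization, hence all its higher cumulants in the multiplicative sense vanish) composed with the affine map $z \mapsto 1 - \beta z$; the small-cumulant property is then inherited because an affine function of a family with good multiplicative behaviour still has small additive cumulants, the extra power of $(q-1)$ coming from the vanishing of the linear term's contribution to genuine (order $\ge 2$) cumulants. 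Making this last inheritance statement precise — essentially that $\ka_H$ of an affine image picks up the extra factor $\prod_{i}y_i = O_1(x^{|H|})$ — is the one step that needs care, and it is exactly the Möbius cancellation described above.
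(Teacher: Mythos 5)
Your reduction to cumulants is where the argument breaks down, and the intermediate claim you reduce to is in fact false. The standing hypothesis of the subsection containing \cref{PropEquivalenceSCQF} is $u_\emptyset=1$, whereas the family of this lemma has $v_\emptyset=1-Cq^{c}\neq 1$; this factor enters $T_H(\vv)$ through the $G=\emptyset$ term of \cref{EqDefTDirect} and is essential to the statement. Concretely, for $H=\{1,2\}$ one has
\[
v_\emptyset\, v_{\{1,2\}}-v_{\{1\}}v_{\{2\}}=-Cq^{c}\left(q^{c_1}-1\right)\left(q^{c_2}-1\right)=O_1\left((q-1)^2\right),
\]
which is exactly the lemma, but the quantity you propose to bound is
\[
\ka_{\{1,2\}}(\vv)=v_{\{1,2\}}-v_{\{1\}}v_{\{2\}}=C(1-C)+O_1\left(q-1\right),
\]
which does not even vanish at $q=1$. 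So the ``sharper bound'' $\ka_H(\vv)=O_1\left((q-1)^{|H|}\right)$ is wrong, and for the same reason the closing heuristic fails: the affine image $z\mapsto 1-\beta z$ of the exactly multiplicative family $I\mapsto\prod_{i\in I}(1+y_i)$ does \emph{not} inherit small additive cumulants (its order-two cumulant is $\beta(1-\beta)+O_1(q-1)$). Two further problems would remain even with a corrected target: \cref{PropEquivalenceSCQF} is stated only for the exponent $|H|-1$, so passing from a cumulant bound with exponent $|H|$ to $T_H=O_1\left((q-1)^{|H|}\right)$ needs a separate argument; and the M\"obius/Weisner cancellation, which you yourself identify as the main obstacle, is left unproved.

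For comparison, the paper's proof never passes through cumulants. It writes $1+T_K(\vv)=R_{\ev}/R_{\odd}$, where $R_{\ev}$ and $R_{\odd}$ are the products of $v_\delta$ over even- and odd-size subsets $\delta\subseteq K$ (the empty set included, which is precisely where $v_\emptyset$ does its work), reduces to showing $R_{\ev}-R_{\odd}=O_1\left((q-1)^{|K|}\right)$, and proves $\left[(q-1)^l\right]R_{\ev}=\left[(q-1)^l\right]R_{\odd}$ for each $l<|K|$ by noting that both sides are symmetric polynomials of degree at most $l$ in the $c_i$, so equality may be checked on the finitely many points of $\Y(|K|,l)$; there some $c_k=0$, and the symmetric difference with $\{k\}$ gives a parity-switching bijection on subsets preserving $|\delta|_c$. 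If you want to salvage a cumulant route, you would at least have to renormalize to $w_I:=v_I/v_\emptyset$ (which leaves $T_H$ unchanged since $\sum_{G\subseteq H}(-1)^{|H|-|G|}=0$), prove the exponent-$|H|$ analogue of \cref{PropEquivalenceSCQF}, and only then attempt the M\"obius computation for $\ka_H$ of the renormalized family; none of these steps is in your proposal.
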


\begin{proof}
    It is enough to prove the statement for $H=K$.
    Indeed, the case of a general set $H$ follows by considering
    the same family restricted to subsets of $H$.
    
Define $R_\ev$ (resp. $R_\odd$) as
\[\prod_\delta \left( 1-C \cdot q^{c + \sum_{i \in \delta}c_i} \right),\]
where the product runs over subsets of $K$ of even (resp. odd)
size. Without loss of generality, we can assume that $|K|$ is even (the case when $|K|$ is odd is analogous).
With this notation,
$T_K(\vv)=R_\ev/R_\odd -1=(R_\ev-R_\odd)/R_\odd$.
Since $R_\odd^{-1}=O_1(1)$ (each term in the product is $O_1(1)$, as well as its inverse),
it is enough to show that $R_\ev-R_\odd=O_1\left((q-1)^{|K|}\right)$.

It is clear that 
\[ R_\ev\big|_{q=1} = R_\odd\big|_{q=1} = (1-C)^{2^{|K|}-1}.\]
Let us fix a positive integer $l < |K|$. 
Expanding the product in the definition of $R_\ev$ in the basis
$\{(q-1)^j\}_{j \geq 0}$, and using the binomial formula, one gets
\[\left[(q-1)^l\right]R_\ev = \sum_{1 \leq i \leq l}(1-C)^{2^{|K|-1}-i}C^i \ \frac{1}{i!} 
\sum_{\delta_1,\dots,\delta_i} \ \ \sum_{\substack{j_1+\cdots+ j_i = l \\ j_1,\dots,j_i \geq 1}} \prod_{1 \leq m \leq i}\binom{|\delta_m|_c}{j_m}.\]
The index set of the second summation symbol is the list of sets of $i$ distinct
(but not necessarily disjoint) subsets of $K$ of even size, and 
\[ |\delta|_c := c + \sum_{i \in \delta}c_i.\] The factor
$\frac{1}{i!}$ in the above formula comes from the fact that we should
sum over sets of $i$ distinct subsets of $K$, instead of lists, but it
is the same as the summation over the set of lists of $i$ distinct
subsets of $K$ and dividing by the number of permutations of $[i]$. 
Strictly from this formula it is clear that $[(q-1)^l]R_\ev$ is a symmetric polynomial in $c_i: i \in K$ of degree at most $l$. 
Of course, a similar formula with subsets of odd size holds for $[(q-1)^l]R_\odd$, which shows that it is a symmetric polynomial in $c_i: i \in K$ of degree at most $l$, as well.
For any positive integers $n,k$ we define a set $\Y(n,k)$ of sequences of $n$ nonnegative, nonincreasing integers, which are of the following form:
\[ \Y(n,k) = \{(\la,0^{n-\ell(\la)})\colon \la \in \Y_k,\ \ell(\la) \leq n\}.\]

It is well known (see for example \cite[Theorem 2.1]{KnopSahi1996}) that if $f,g$ are two symmetric polynomials of degree at most $k$ in $n$ indeterminates, then
\[ f = g \iff \forall \bm{x} \in \Y(n,k)\ \  f(\bm{x}) = g(\bm{x}). \]
Thus, in order to show that $[(q-1)^l]R_\ev = [(q-1)^l]R_\odd$ it is enough to show that this equality holds for all $(c_i)_{i \in K} \in \Y(|K|,l)$. Note that since $l < |K|$, then $c_k$ is necessarily equal to $0$, where $k$ is the biggest possible $k \in K$. It means that the function 
\[ f: (K)_{\ev}:=\{\delta \subset K: \delta \text{ has even size }\} \to (K)_{\odd}:=\{\delta \subset K: \delta \text{ has odd size }\}\] given by $f(\delta) := \delta \nabla \{k\}$, where $\nabla$ is the symmetric difference operator, is a bijection which preserves the following statistic $|\delta|_c = |f(\delta)|_c$.

Thus one has 
\begin{multline*}
\left[(q-1)^l\right]R_\ev = \sum_{1 \leq i \leq l}(1-C)^{2^{|K|-1}-i}C^i \ \frac{1}{i!} 
\sum_{\delta_1,\dots,\delta_i \in (K)_{\ev}} \ \ \sum_{\substack{j_1+\cdots+ j_i = l \\ j_1,\dots,j_i \geq 1}} \prod_{1 \leq m \leq i}\binom{|\delta_m|_c}{j_m}\\
= \sum_{1 \leq i \leq l}(1-C)^{2^{|K|-1}-i}C^i \ \frac{1}{i!} 
\sum_{\delta_1,\dots,\delta_i \in (K)_{\ev}} \ \ \sum_{\substack{j_1+\cdots+ j_i = l \\ j_1,\dots,j_i \geq 1}} \prod_{1 \leq m \leq i}\binom{|f(\delta_m|_c)}{j_m}\\
= \sum_{1 \leq i \leq l}(1-C)^{2^{|K|-1}-i}C^i \ \frac{1}{i!} 
\sum_{\delta_1,\dots,\delta_i \in (K)_{\odd}} \ \ \sum_{\substack{j_1+\cdots+ j_i = l \\ j_1,\dots,j_i \geq 1}} \prod_{1 \leq m \leq i}\binom{|\delta_m|_c}{j_m} = \left[(q-1)^l\right]R_\odd.
\end{multline*}
Since $l < |K|$ was an arbitrary positive integer, we have shown that
\[ R_\ev-R_\odd=O_1\left((q-1)^{|K|}\right), \]
which finishes the proof.
\end{proof}

\begin{proposition}
Fix some partitions $\la^1$, \dots, $\la^r$
and for a subset $I$ of $[r]$ set $u_I=h_{(q,t)}\left(\la^I\right)$.
    The family $(u_I)$ has the strong factorization, and hence, the
    small cumulant properties when $q \to 1$.
    \label{PropKumuHook}
\end{proposition}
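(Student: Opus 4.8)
The plan is to deduce this from \cref{PropEquivalenceSCQF}: since each $u_I:=h_{(q,t)}(\lambda^I)$ is a nonzero polynomial in $q,t$ which at $q=1$ takes the nonzero value $\prod_{\square\in\lambda^I}(1-t^{\ell(\square)+1})$, and $u_\emptyset=1$, it suffices to establish the strong factorization property, namely
\[
T_H(\uu)=O_1\!\left((q-1)^{|H|-1}\right)\qquad\text{for every }H\subseteq[r],\ |H|\ge 2 .
\]
The key preliminary step is to rewrite the hook polynomial so that all of its dependence on $I$ is carried by the additive quantities $\lambda^I_j=\sum_{i\in I}\lambda^i_j$. Regrouping the product over boxes according to the pair $(j,j')$, where $j$ is the row of a box and $j'=\lambda^t_i$ is the bottom row of its column (such a box has leg length $j'-j$ and column index $i$ satisfying $\lambda_{j'+1}<i\le\lambda_{j'}$), and then substituting $a=\lambda_j-i$, one obtains, for any partition $\lambda$ with $\ell(\lambda)\le N$,
\[
 h_{(q,t)}(\lambda)=\prod_{1\le j\le j'\le N}\ \prod_{a=\lambda_j-\lambda_{j'}}^{\lambda_j-\lambda_{j'+1}-1}\bigl(1-q^{a}t^{\,j'-j+1}\bigr),
\]
with the convention that a range whose upper bound is below its lower bound is empty. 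Fixing $N\ge\max_i\ell(\lambda^i)$ and putting $\Phi_c(x):=\sum_{a=0}^{x-1}\log\!\left(1-q^{a}t^{c}\right)$, and using $\prod_{a=A}^{B}=\prod_{a=0}^{B}\big/\prod_{a=0}^{A-1}$ (legitimate since $\lambda^I_j-\lambda^I_{j'}\ge 0$), this formula exhibits $\log h_{(q,t)}(\lambda^I)$ as a finite signed sum of terms $\Phi_{c}\!\bigl(\sum_{i\in I}\gamma_i\bigr)$, in which $c\ge1$ and each $\gamma=(\gamma_i)_i$ is one of the tuples of nonnegative integers $(\lambda^i_j-\lambda^i_{j'})_i$ or $(\lambda^i_j-\lambda^i_{j'+1})_i$. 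All logarithms here are treated as formal power series in $q-1$, which is legitimate because every quantity involved is a rational function of $q$ regular and nonzero at $q=1$.

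The rest is a finite-difference estimate. Expanding around $q=1$, the relation $q^{a}=\sum_{l\ge0}\binom{a}{l}(q-1)^{l}$ shows that $\log(1-q^{a}t^{c})=\sum_{m\ge0}\psi_{c,m}(a)\,(q-1)^{m}$ with $\psi_{c,m}$ a polynomial in $a$ of degree at most $m$, hence $\Phi_c(x)=\sum_{m\ge0}\Phi_{c,m}(x)\,(q-1)^{m}$ with $\Phi_{c,m}$ a polynomial in $x$ of degree at most $m+1$. By the definition \cref{EqDefTDirect} of $T_H$,
\[
 \log\bigl(1+T_H(\uu)\bigr)=\sum_{G\subseteq H}(-1)^{|H|-|G|}\,\log h_{(q,t)}(\lambda^G),
\]
so by the first step $\log(1+T_H(\uu))$ is a finite signed combination of the series $\sum_{m\ge0}(q-1)^{m}\bigl(\prod_{h\in H}\Delta_{\gamma_h}\Phi_{c,m}\bigr)(0)$, where $\Delta_a f(y):=f(y+a)-f(y)$ and one uses the elementary identity $\sum_{G\subseteq H}(-1)^{|H|-|G|}f\!\bigl(\sum_{i\in G}\gamma_i\bigr)=\bigl(\prod_{h\in H}\Delta_{\gamma_h}f\bigr)(0)$. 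Since a composition of $|H|$ finite-difference operators annihilates every polynomial of degree $<|H|$, and $\deg\Phi_{c,m}\le m+1$, the coefficient of $(q-1)^{m}$ vanishes for all $m\le|H|-2$. Therefore $\log(1+T_H(\uu))=O_1\!\left((q-1)^{|H|-1}\right)$, and exponentiating gives $T_H(\uu)=O_1\!\left((q-1)^{|H|-1}\right)$; by \cref{PropEquivalenceSCQF} this yields both the strong factorization and the small cumulant properties.

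The only genuinely non-formal ingredient is the reorganization of the hook product into the $(j,j')$-indexed form displayed above, so I expect that step — together with the bookkeeping of which linear expressions $\sum_{i\in I}\gamma_i$ actually occur — to be the main obstacle; once it is in place, the conclusion is just the vanishing of sufficiently high iterated differences of polynomials. I also observe that the single-factor case of this mechanism, the family $1-Cq^{\,c+\sum_{i\in I}c_i}$ (for which one even gains one extra power of $q-1$, since no summation over $a$ is performed), is precisely \cref{LemUsefulExample}; so an equivalent route would decompose each block of the product formula into such factors and combine \cref{LemUsefulExample} with \cref{corol:stable_product}, the computation above being merely a uniform packaging of the same estimate.
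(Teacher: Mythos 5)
Your proof is correct, but it follows a genuinely different route from the paper's. The paper works multiplicatively and box by box: it identifies the boxes of $\la^I$ with boxes of the individual $\la^{i}$ via column sorting, observes that leg-lengths are preserved while arm-lengths add up as in \cref{EqArmLengthInSum}, and thereby factors $1+T_{[r]}(\uu)$ over boxes into alternating products of the special form $\prod_{G}(1-Cq^{c+\sum_{i\in G}c_i})^{\pm 1}$; these are then controlled by \cref{LemUsefulExample}, whose proof uses a symmetric-polynomial evaluation trick (checking equality of two symmetric polynomials of degree $\le l$ on $\Y(|K|,l)$ via a parity-switching bijection). You instead reorganize the hook product by row pairs $(j,j')$ so that the only $I$-dependence enters through the tautologically additive quantities $\la^I_j=\sum_{i\in I}\la^i_j$, telescope each block, pass to formal logarithms, and conclude from the fact that an $|H|$-fold iterated finite difference annihilates polynomials of degree $<|H|$. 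Your version buys a cleaner input (no box identification, no arm-length additivity lemma, and the degree-counting is completely transparent); the price is the telescoping/division step and the formal-logarithm bookkeeping — the constant terms $\log(1-t^{c})$ do not live in $\QQ(q,t)$, so one must either treat them as formal symbols or note that they are already killed by a single difference, which your degree bound $\deg\Phi_{c,0}\le 1$ implicitly does; this is routine but worth a sentence. Your closing observation is also accurate: \cref{LemUsefulExample} is exactly the single-factor instance of your finite-difference estimate (with one fewer summation over $a$, hence one extra power of $q-1$), so one could indeed splice your $(j,j')$-decomposition onto the paper's lemma together with \cref{corol:stable_product} and obtain a hybrid of the two arguments.
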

\begin{proof}
    Fix some subset $I=\{i_1,\dots,i_t\}$ of $[r]$ with $i_1<\cdots<i_t$.
    Observe that the Young diagram $\la^I$ can be constructed 
    by sorting the columns of the diagrams $\la^{i_1}$, \ldots, $\la^{i_t}$ in decreasing order.
    When several columns have the same length, we put first the columns of $\la^{i_1}$,
    then those of $\la^{i_2}$ and so on; see \cref{fig:LegArmSum}
    (at the moment, please disregard symbols in boxes).
    This gives a way to identify boxes of $\la^I$ with boxes of the diagrams $\la^{i_s}$ ($1 \le s \le t$)
    that we shall use below.

    With this identification, if $b=(c,r)$ is a box in $\la^{g}$ for some $g \in I$, 
    its leg-length in $\la^I$ is the same as in $\la^{g}$.
    We denote it by $\ell(b)$.

    However, the arm-length of $b$ in $\la^I$ may be bigger 
    than the one in $\la^{g}$.
    We denote these two quantities by $a_I(b)$ and $a_{g}(b)$.
    Let us also define $a_i(b)$ for $i \neq g$ in $I$, as follows:
    \begin{itemize}
        \item for $i<g$, $a_i(b)$ is the number of boxes $b'$ in the $r$-th row of $\la^i$
            such that the size of the column of $b'$ is {\bf smaller than} the size of the column of $b$
            ({\em e.g.}, on \cref{fig:LegArmSum}, for $i=1$, these are
            boxes with a diamond);
        \item for $i>g$, $a_i(b)$ is the number of boxes $b'$ in the $r$-th row of $\la^i$
            such that the size of the column of $b'$ is {\bf at most} the size of the column of $b$
            ({\em e.g.}, on \cref{fig:LegArmSum}, for $i=3$, these are
                   boxes with an asterisk).
    \end{itemize}
    Looking at \cref{fig:LegArmSum}, it is easy to see that
    \begin{equation}
        a_I(b)= \sum_{i \in I} a_i(b).
        \label{EqArmLengthInSum}
    \end{equation}
    \begin{figure}[t]
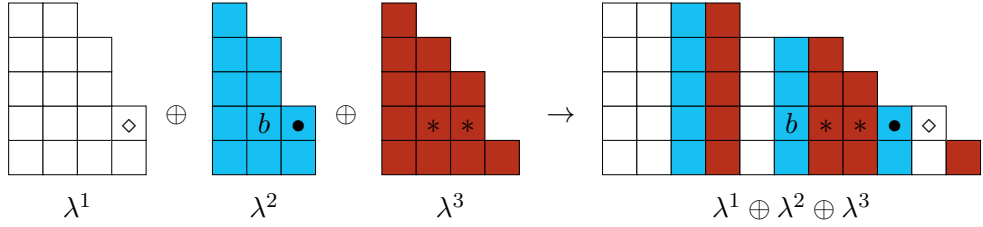

        \[
        \begin{array}{c}
        	    \YFrench
            \young(\ \ \ \ ,\ \ \ \diamond,\ \ \ ,\ \ \ ,\ \ ) \vspace{2mm} \\
        \la^1
    \end{array}\ \oplus\ \ 
    \begin{array}{c}
            \YFrench
            \Yfillcolour{cyan!70}
            \young(\ \ \ ,\ b\bullet,\ \ ,\ \ ,\ ) \vspace{2mm} \\
        \la^2
    \end{array}\ \oplus\ \ 
    \begin{array}{c}
            \YFrench
            \Yfillcolour{BrickRed}
            \young(\ \ \ \ ,\ \ast\ast,\ \ \ ,\ \ ,\ ) \vspace{2mm} \\
        \la^3
    \end{array}
    \ \to\
    \begin{array}{c}
        \newcommand\yb{\Yfillcolour{cyan!70}}
        \newcommand\yr{\Yfillcolour{BrickRed}}
        \newcommand\yw{\Yfillcolour{white}}
        \YFrench
        \young(\ \ !\yb\ !\yr\ !\yw\ !\yb\ !\yr\ \ !\yb\ !\yw\ !\yr\ ,!\yw\ \ !\yb\ !\yr\ !\yw\ !\yb b!\yr\ast\ast!\yb\bullet!\yw\diamond,!\yw\ \ !\yb\ !\yr\ !\yw\ !\yb\ !\yr\ \ ,!\yw\ \ !\yb\ !\yr\ !\yw\ !\yb\ !\yr\ ,!\yw\ \ !\yb\ !\yr\ )
        \vspace{2mm} \\
        \la^1\oplus\la^2\oplus\la^3
    \end{array}\vspace{-4mm}
        \]
        \caption{The diagram of an entry-wise sum of partitions.}
        \label{fig:LegArmSum}
    \end{figure}
    Therefore, for $G \subseteq [r]$, one has:
    \[u_G=h_{(q,t)}\left( \bigoplus_{g \in G} \la^g \right)
    = \prod_{g \in G} 
    \prod_{b \in \la^g} \left(1-q^{a_G(b)}t^{\ell(b)+1}\right). \] 
    From the definition of $T_{[r]}(\uu)$, given by \cref{EqDefTDirect}, we get:
    \begin{multline}
        1 + T_{[r]}(\uu) = \prod_{G \subseteq [r]} \left( \prod_{g \in G}                      
            \prod_{b \in \la^g} \left(1-q^{a_G(b)}t^{\ell(b)+1}\right) \right)^{(-1)^{r-|G|}} \\
            =\prod_{g \in [r]} \prod_{b \in \la^g}
            \left( \prod_{G \subseteq [r] \atop  G \ni g} \left(1-q^{a_G(b)}t^{\ell(b)+1}\right)^{(-1)^{r-|G|}}
            \right).
            \label{EqTechnicalTHook}
    \end{multline}
    The expression inside the bracket corresponds to $1+T_{[r] \setminus \{g\}}(\vv^b)$, where $\vv^b$ is defined
    as follows: if $I$ is a subset of $[r] \setminus \{g\}$, then
    \[v^b_I = \left(1-q^{a_{I\cup\{g\}}(b)}t^{\ell(b)+1}\right).\]
    Plugging \cref{EqArmLengthInSum} into definition of $v^b_I$, we observe that $v^b_I$ is as in \cref{LemUsefulExample}
    with the following values of the parameters: $K=[r] \setminus \{g\}$, 
    $C=t^{\ell(b)+1}$, $c = 1$, and $c_i = a_i(b)$ for $i \neq g$.
    Therefore, we conclude that
    \[T_{[r] \setminus \{g\}}(\vv^b)= O_1\left((q-1)^{r-1}\right).\]
    Going back to \cref{EqTechnicalTHook}, we have:
    \[1 + T_{[r]}(\uu) =\prod_{g \in [r]} \prod_{b \in \la^g} \left(1 + T_{[r] \setminus \{g\}}(\vv^b)\right)=
    1+O_1\left((q-1)^{r-1}\right),\]
    which completes the proof.
\end{proof}

We finish this section by presenting an important corollary from the above result.

\begin{proposition}
\label{cor:CumulantsTopMonomialExpansion}
For any partitions $\la^1, \dots, \la^r$ the cumulant $\kumu^\J(\la^1,\dots,\la^r)$ has a monomial expansion of the following form
\[ \kumu^\J(\la^1,\dots,\la^r) = \sum_{\mu \prec \la^{[r]}} c^{\la^1,\dots,\la^r}_\mu m_\mu + O_1\left((q-1)^{r-1}\right), \]
where 
\[c^{\la^1,\dots,\la^r}_\mu \in \begin{cases} \Z[q,t] &\text{ for } |\mu| = |\la^{[r]}|,\\ \Z[q,t^{-1},t] &\text{ for } |\mu| < |\la^{[r]}|.\end{cases}\]
\end{proposition}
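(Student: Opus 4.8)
The plan is to read off the monomial expansion of $\kumu^\J(\la^1,\dots,\la^r)=\sum_{\pi\in\PPP([r])}\mu(\pi,\{[r]\})\prod_{B\in\pi}\J^{(q,t)}_{\la^B}$ term by term, using two structural inputs and isolating the single monomial where all the analytic content is concentrated. The first input is \cref{PropDefMacdonald}, condition (C2), together with Sahi's integrality statement: each $\J^{(q,t)}_\la$ has an $m$-expansion $\J^{(q,t)}_\la=h_{(q,t)}(\la)\,m_\la+\sum_{\nu\prec\la}a^\la_\nu m_\nu$ with all coefficients in $\Z[q,t^{-1},t]$, those of homogeneous degree $|\la|$ in $\Z[q,t]$; in particular the $m$-support of $\J^{(q,t)}_\la$ is contained in $\{\nu:\nu\preceq\la\}$ and its leading coefficient (on $m_\la$) is $h_{(q,t)}(\la)$. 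The second input is the elementary triangularity of products of monomial symmetric functions, $m_\mu m_\nu=m_{\mu\oplus\nu}+\sum_{\rho\prec\mu\oplus\nu}(\text{nonnegative integers})\,m_\rho$, where the leading coefficient is $1$ (a rearrangement argument: the first part of $\mu\oplus\nu$ equals $\mu_1+\nu_1$, which forces a unique choice of exponents on the largest variable, and one recurses). Iterating and using that $\oplus$ is associative and commutative, so that $\bigoplus_{B\in\pi}\la^B=\la^{[r]}$ for every set partition $\pi$, we get $\prod_{B\in\pi}m_{\la^B}=m_{\la^{[r]}}+\sum_{\rho\prec\la^{[r]}}(\cdots)\,m_\rho$.

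First I would combine these to describe $\prod_{B\in\pi}\J^{(q,t)}_{\la^B}$ for a fixed $\pi$: since $\nu^B\preceq\la^B$ for all $B$ implies $\bigoplus_B\nu^B\preceq\la^{[r]}$, with equality only when $\nu^B=\la^B$ for every $B$ (by additivity of sizes and of partial sums), this product is $m$-supported on $\{\mu:\mu\preceq\la^{[r]}\}$, has monomial coefficients in $\Z[q,t^{-1},t]$, those of degree $|\la^{[r]}|$ in $\Z[q,t]$ (a degree-$|\la^{[r]}|$ monomial can only come from the top-degree parts of all factors), and its coefficient on $m_{\la^{[r]}}$ equals $\prod_{B\in\pi}h_{(q,t)}(\la^B)$. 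Taking the $\mu(\pi,\{[r]\})$-weighted sum over $\pi$ then gives all of the statement except the error term: $\kumu^\J(\la^1,\dots,\la^r)$ is $m$-supported on $\{\mu:\mu\preceq\la^{[r]}\}$, with coefficients $c^{\la^1,\dots,\la^r}_\mu\in\Z[q,t^{-1},t]$, and in $\Z[q,t]$ when $|\mu|=|\la^{[r]}|$; and the coefficient on $m_{\la^{[r]}}$ equals $\sum_{\pi\in\PPP([r])}\mu(\pi,\{[r]\})\prod_{B\in\pi}h_{(q,t)}(\la^B)$, which is precisely the partial cumulant $\ka_{[r]}(\uu)$ of the family $u_I:=h_{(q,t)}(\la^I)$ considered in \cref{PropKumuHook}.

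It then remains to show this last coefficient is negligible. By \cref{PropKumuHook} the family $\uu$ has the small cumulant property at $q\to1$, so $\ka_{[r]}(\uu)=\bigl(\prod_{h\in[r]}h_{(q,t)}(\la^h)\bigr)\,O_1\bigl((q-1)^{r-1}\bigr)$; as each $h_{(q,t)}(\la^h)\in\Z[q,t]$ has no pole at $q=1$, this is $O_1\bigl((q-1)^{r-1}\bigr)$. Moving the term $\ka_{[r]}(\uu)\,m_{\la^{[r]}}$ into the error yields the claimed expansion $\kumu^\J(\la^1,\dots,\la^r)=\sum_{\mu\prec\la^{[r]}}c^{\la^1,\dots,\la^r}_\mu m_\mu+O_1\bigl((q-1)^{r-1}\bigr)$.

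The ring bookkeeping (tracking $\Z[q,t]$ versus $\Z[q,t^{-1},t]$) and the dominance-order bookkeeping (monotonicity of $\oplus$ under $\preceq$) are routine once Sahi's integrality and the triangularity of $m$-products are in hand. The one point that deserves care, and which I expect to be the main (if minor) obstacle, is the identification of the leading coefficient of $\kumu^\J$ with the hook cumulant $\ka_{[r]}(\uu)$: this needs both the fact that the leading coefficient in $\prod_{B\in\pi}m_{\la^B}=m_{\la^{[r]}}+(\text{lower})$ is exactly $1$ (not merely some positive integer) and a careful matching of the M\"obius/sign conventions between the definition of $\kumu^\J$ and the partial cumulant of \cref{EqDefCumulants}, so that \cref{PropKumuHook} applies verbatim.
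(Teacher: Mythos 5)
Your proposal is correct and follows essentially the same route as the paper: expand each $\J^{(q,t)}_{\la^B}$ via condition (C2) with Sahi's integrality, use the unitriangularity of products of monomial symmetric functions to identify the coefficient of $m_{\la^{[r]}}$ in $\kumu^\J$ with the partial cumulant of the family $u_I=h_{(q,t)}(\la^I)$, and invoke \cref{PropKumuHook} to absorb that leading term into the $O_1\left((q-1)^{r-1}\right)$ error. Your extra care about the sign/M\"obius convention is warranted (the displayed definition of $\kumu^J$ in the introduction is written with $(-1)^{\#\pi}$, but the examples and the rest of the paper use $\mu(\pi,\{[r]\})$ as you do), and your observation that strict additivity of dominance partial sums forces $\nu^B=\la^B$ in the leading term is exactly the point the paper leaves implicit.
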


\begin{proof}
First, observe that for any partitions $\nu^1$ and $\nu^2$, one has
\[ m_{\nu^1}m_{\nu^2} = m_{\nu^1 \oplus \nu^2} + \sum_{\mu < \nu^1 \oplus \nu^2}b^{\nu^1,\nu^2}_\mu m_\mu,\]
for some integers $b^{\nu^1,\nu^2}_\mu$.

Fix partitions $\la^1, \dots, \la^r$ and a set partition $\pi=\{\pi_1,\cdots,\pi_s\} \in \PPP([r])$.
Note that $\la^{\pi_1} \oplus \cdots \oplus \la^{\pi_s}=\la^{[r]}$.
Thanks to \cref{eq:MacdonaldInMonomial} and the above observation on products of monomials,
there exist some coefficients 
\[d^{\la^{\pi_1},\cdots,\la^{\pi_s}}_\mu \in \begin{cases} \Z[q,t]
  &\text{ for } |\mu| = |\la^{[r]}|,\\ \Z[q,t^{-1},t] &\text{ for }
  |\mu| < |\la^{[r]}|\end{cases}\] 
such that:
\[\J_{\la^{\pi_1}}\cdots \J_{\la^{\pi_s}} = h_{(q,t)}(\la^{\pi_1})\cdots h_{(q,t)}(\la^{\pi_s}) m_{\la^{[r]}} + \sum_{\mu \prec \la^{[r]}}d^{\la^{\pi_1},\cdots,\la^{\pi_s}}_\mu m_\mu.\]
As a consequence, there exist some coefficients 
\[c^{\la^1,\dots,\la^r}_\mu \in \begin{cases} \Z[q,t] &\text{ for }
  |\mu| = |\la^{[r]}|,\\ \Z[q,t^{-1},t] &\text{ for } |\mu| <
  |\la^{[r]}|.\end{cases}\] 
such that
\[ \kumu^\J(\la^1,\dots,\la^r) = \kappa_{[\r]}(\vv)m_{\la^{[r]}} + \sum_{\mu < \la^{[r]}} c^{\la^1,\dots,\la^r}_\mu m_\mu,\]
where 
$v_I=h_{(q,t)}\left( \la^{I} \right)$.
\cref{PropKumuHook} completes the proof.
\end{proof}


\section{Differential operator and cumulant of interpolation Macdonald polynomials}
\label{sec:proof}

Let us fix partitions $\la^1,\dots,\la^r$, and for any subset $I
\subseteq [r]$ we define $u_I := \J^{(q^{-1},t^{-1})}_{\la^I}$. The purpose of
this section is an analysis of the action of the differential operator $D$
-- defined in \cref{eq:D} -- on the cumulant
$\kumu^J(\la^1,\dots,\la^r) = \kumu_{[r]}(\uu)$ with parameters
$q^{-1}$, and $t^{-1}$. In particular, this
analysis leads to the proofs of two crucial lemmas used in the proof
of \cref{theo:CumulantsMacdonald}. 

\subsection{Analysis of the decomposition}

For any positive integer $r$ and for any partitions $\la^1, \dots, \la^r$ we define
\begin{equation}
\label{eq:InclusionExclusion}
\IE_j(\la^1,\dots,\la^r) := \sum_{I \subseteq [r]}(-1)^{r-|I|}b^N_j\left(\la^I\right),
\end{equation}
where $b^N_j\left(\la^I\right)$ is given by \cref{eq:BinomialEigenvalue}.

\begin{proposition}
\label{prop:CumulantOfPartitionsVanishes}
Let $r > j \geq 1$ be positive integers. Then, for any partitions $\la^1, \dots, \la^r$ one has:
\[ \IE_j(\la^1,\dots, \la^r) = 0.\]
\end{proposition}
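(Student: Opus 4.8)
The plan is to recognize that $\IE_j(\la^1,\dots,\la^r)$ is (up to the inclusion–exclusion over subsets) a higher iterated finite difference of the function $I \mapsto b^N_j(\la^I)$, and then to exploit the explicit formula \cref{eq:BinomialEigenvalue},
\[ b^N_j(\la^I) = \sum_{1 \le i \le N}\binom{(\la^I)_i}{j}t^{N-i},
\qquad (\la^I)_i = \sum_{k \in I}\la^k_i.\]
Since $t^{N-i}$ is a constant not depending on $I$, it suffices to show for each fixed row index $i$ that
\[ \sum_{I \subseteq [r]}(-1)^{r-|I|}\binom{\sum_{k \in I}\la^k_i}{j} = 0
\quad\text{whenever } r > j. \]
In other words, writing $x_k := \la^k_i$ for $k \in [r]$, the claim reduces to the purely combinatorial identity that the $r$-fold alternating sum of $\binom{x_{i_1}+\cdots}{j}$ over subsets vanishes once $r>j$.

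First I would observe that $x \mapsto \binom{x}{j}$ is a polynomial in $x$ of degree exactly $j$, hence $(x_1,\dots,x_r)\mapsto \binom{x_1+\cdots+x_r}{j}$ is a polynomial of total degree $j$ in the variables $x_1,\dots,x_r$. Then I would invoke the standard fact that for a multivariate polynomial $P(x_1,\dots,x_r)$, the alternating subset sum $\sum_{I\subseteq[r]}(-1)^{r-|I|}P\big((x_k)_{k\in I}\big)$ — equivalently the mixed difference operator $\Delta_{x_1}\cdots\Delta_{x_r}$ applied at the origin in each variable being "turned off" — annihilates $P$ as soon as $P$ has no monomial containing all of $x_1,\dots,x_r$ simultaneously; and a monomial of total degree $\le j < r$ cannot involve all $r$ variables. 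More concretely, expand $\binom{x_1+\cdots+x_r}{j}$ into monomials $c_{\bm a}\prod_k x_k^{a_k}$ with $|\bm a|\le j$; for each such monomial the subset sum factorizes as $\prod_{k\colon a_k=0}(\text{sum over }\{k\in I\text{ or not}\})$, and any variable $x_k$ with $a_k=0$ contributes a factor $\sum_{\epsilon\in\{0,1\}}(-1)^{1-\epsilon}=0$; since $|\bm a|\le j<r$ at least one $a_k$ is zero, so every monomial is killed.

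Assembling these pieces: fix $i$, apply the monomial argument to get the per-row sum to vanish, sum over $i$ weighted by $t^{N-i}$, and conclude $\IE_j(\la^1,\dots,\la^r)=0$. I expect no serious obstacle here — the only mild care needed is the bookkeeping of the inclusion–exclusion signs $(-1)^{r-|I|}$ and making sure the "empty row" case (when $(\la^I)_i=0$ because $\ell(\la^k)<i$ for all relevant $k$, or when $I=\emptyset$) is handled uniformly, which it is since $\binom{0}{j}=0$ for $j\ge 1$ and the polynomial identity is insensitive to these degeneracies. One could alternatively phrase the whole argument in one line via the operator identity $\sum_{I\subseteq[r]}(-1)^{r-|I|}f(\sum_{k\in I}y_k) = (\Delta_{y_1}\circ\cdots\circ\Delta_{y_r} f)\big|_{y=0}$ for a one-variable function $f$, with $\Delta_{y}g(y_1,\dots)=g(\dots,y+\text{shift},\dots)-g(\dots)$ suitably interpreted, and then note that the $r$-th finite difference of a degree-$j$ polynomial is zero when $r>j$; but the explicit monomial expansion is cleaner to write down and self-contained, so that is the route I would take.
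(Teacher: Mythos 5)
Your proof is correct and follows essentially the same route as the paper: reduce to a fixed row $i$, view the alternating subset sum as applied to the degree-$j$ polynomial $\binom{x_1+\cdots+x_r}{j}$, and kill each monomial because its support has size at most $j<r$, so some variable's inclusion/exclusion contributes a factor $1-1=0$ (the paper phrases this as the vanishing of $\sum_{[\ell(\mu)]\subseteq I\subseteq[r]}(-1)^{r-|I|}$ after an explicit Stirling-number computation of the coefficient, which your factorization argument renders unnecessary).
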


\begin{proof}
Expanding the definition and completing partitions with zeros, we have:
\[ \IE_j(\la^1,\dots,\la^r) = \sum_{1 \leq i \leq N} \sum_{I \subseteq [r]}(-1)^{r-|I|} \binom{\la^I_i}{j}t^{N-i}.\]
In particular, we have to prove that the summand corresponding to any given $1 \leq i \leq N$ is equal to $0$.
In other terms, we have to show that the polynomial
\[ \sum_{I \subseteq [r]}(-1)^{r-|I|} \binom{\bm{x}_I}{j} = 0,\]
where $\bm{x} = (x_1,\dots,x_r)$, and $\bm{x}_I := \sum_{i \in I}x_i$.

Note that it is a symmetric polynomial in $\bm{x}$ without constant term of degree at most $j$, thus it is enough to show that the coefficient of $\bm{x}^\mu := x_1^{\mu_1}\cdots x_r^{\mu_r}$ is equal to zero for all nonempty partitions $\mu$ of size at most $j$.
This coefficient is given by:
\[ \binom{|\mu|}{\mu_1,\dots,\mu_r}\frac{s(j,|\mu|)}{j!}\sum_{[\ell(\mu)] \subseteq I \subseteq [r]}(-1)^{r-|I|},\]
where $s(j,k)$ is the Stirling number of the first kind, i.e.
\[ (x)_j := x(x-1)\cdots(x-j+1) = \sum_{0 \leq k \leq j}s(j,k)x^{k}.\]
Since $\ell(\mu) \leq |\mu| \leq j < r$, we have that
\[ \sum_{[\ell(\mu)] \subseteq I \subseteq [r]}(-1)^{r-|I|} = 0,\]
which finishes the proof.
\end{proof}

We recall that
\[ \widetilde{D}\kumu_{[r]}(\uu) = \widetilde{D}\kumu^\J(\la^1,\dots,\la^r) = \sum_{\pi \in
  \PPP([r])}\mu\big(\pi,\{[r]\}\big)\widetilde{D}\left(\J^{(q^{-1},t^{-1})}_{\la^B} : B \in
  \pi\right),\]
where
\[ \widetilde{D}\left(\J^{(q^{-1},t^{-1})}_{\la^B} : B \in
  \pi\right) = \sum_{B \in \pi}\left(D \J^{(q^{-1},t^{-1})}_{\la^B}\ \cdot
  \prod_{B' \in \pi\setminus\{B\}}\J^{(q^{-1},t^{-1})}_{\la^{B'}}\right).\]

\begin{lemma}
\label{lem:A1SimpleForm}
For any positive integer $r \geq 2$ and any partitions $\la^1, \dots, \la^r$, the following equality holds true:
\begin{equation*}
\widetilde{D}\kumu^{\J}(\la^1,\dots,\la^r) = \sum_{j \geq 1}(q-1)^j\sum_{\substack{\si \in \PPP([r])\\ \#\si\leq j}} \IE_j\left(\la^B: B \in \si\right) \left(\prod_{B \in \si}\kumu^{\J}(\la^i: i
  \in B)\right).
\end{equation*}
\end{lemma}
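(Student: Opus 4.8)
The plan is to compute $\widetilde{D}\kumu^{\J}(\la^1,\dots,\la^r)$ by first understanding the action of $D$ on a single interpolation Macdonald polynomial and then reorganizing the resulting expression combinatorially. Recall that $\J^{(q^{-1},t^{-1})}_{\la^I}$ is an eigenfunction of $D$ with eigenvalue $\ev(\la^I) = \sum_{j \geq 1}(q-1)^j b^N_j(\la^I)$. Hence for a set partition $\pi = \{B_1,\dots,B_s\}$, the Leibniz-type sum $\widetilde{D}(\J^{(q^{-1},t^{-1})}_{\la^B}: B \in \pi)$ is simply $\left(\sum_{B \in \pi} \ev(\la^B)\right)\prod_{B \in \pi}\J^{(q^{-1},t^{-1})}_{\la^B}$. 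Substituting the expansion of $\ev$ in powers of $(q-1)$, I get
\[
\widetilde{D}\kumu^{\J}(\la^1,\dots,\la^r) = \sum_{j \geq 1}(q-1)^j \sum_{\pi \in \PPP([r])}\mu(\pi,\{[r]\})\left(\sum_{B \in \pi} b^N_j(\la^B)\right)\prod_{B \in \pi}\J^{(q^{-1},t^{-1})}_{\la^B}.
\]

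The next step is to recognize the inner sum over $\pi$ as something expressible through lower cumulants. Fix $j$. For each $\pi$ and each distinguished block $B_0 \in \pi$, the term $b^N_j(\la^{B_0})\prod_{B \in \pi}u_B$ (with $u_I := \J^{(q^{-1},t^{-1})}_{\la^I}$) should be collected according to the block $B_0$, and then one should sum over the remaining structure. The idea is that fixing a subset $S \subseteq [r]$ playing the role of $\bigcup$-of-everything-merged-with-$B_0$, or more precisely grouping by the coarser partition $\si$ obtained from $\pi$, will turn the $\mu(\pi,\{[r]\})$ weights together with the moment-cumulant relation \cref{EqMomCum} into products of cumulants $\prod_{B\in\si}\kumu^{\J}(\la^i:i\in B)$, with a coefficient that is exactly an inclusion-exclusion alternating sum of $b^N_j$ over subsets — that is, $\IE_j(\la^B: B \in \si)$ as defined in \cref{eq:InclusionExclusion}. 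Concretely, I would write $\prod_{B\in\pi}u_B$ in terms of cumulants via \cref{EqMomCum} applied blockwise, i.e.\ $u_B = \sum_{\rho \in \PPP(B)}\prod_{C \in \rho}\kumu_C(\uu)$, then swap the order of summation so that the outer sum is over the resulting refinement data and the inner sum over $\pi$ becomes a Möbius-function computation.

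The combinatorial heart of the argument is a Möbius-inversion / sign-cancellation identity: after the reorganization, the coefficient attached to a fixed coarse partition $\si$ and the choice of which block carries the $b^N_j$ factor collapses, via Weisner's theorem (\cref{prop:Weisner}) or a direct computation with \cref{EqValueMobius}, to the alternating sum $\sum_{I}(-1)^{(\cdot)-|I|}b^N_j(\la^I)$ that defines $\IE_j$. This is structurally parallel to the proof of \cref{lem:zasada}, where a similar "fix the support, apply Weisner" mechanism forces all but one term to vanish; I expect to be able to reuse that style of argument almost verbatim, tracking the extra linear factor $b^N_j(\la^{B_0})$ and using its additivity $b^N_j(\la^{I\sqcup I'})$ — careful, it is \emph{not} additive, but $b^N_j(\la^I) = \sum_{1\le i\le N}\binom{\la^I_i}{j}t^{N-i}$, so the needed identity is about the subset-sum structure of the parts $\la^I_i = \sum_{g\in I}\la^g_i$, and the relevant cancellation is precisely the one exploited in \cref{prop:CumulantOfPartitionsVanishes}. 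The constraint $\#\si \leq j$ in the final sum comes out automatically: if $\#\si > j$ then, as in \cref{prop:CumulantOfPartitionsVanishes}, the inclusion-exclusion sum $\IE_j(\la^B: B\in\si)$ vanishes identically.

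The step I expect to be the main obstacle is the bookkeeping of the double reorganization — simultaneously expanding each $u_B$ into cumulants and re-summing the Möbius weights $\mu(\pi,\{[r]\})$ — so that the distinguished-block choice and the refinement-to-$\si$ collapse land exactly on the $\IE_j$ coefficient with the correct sign and no stray multiplicities. A clean way to manage this is to mimic the proof of \cref{lem:zasada}: fix a target monomial $\prod_{B\in\si}\kumu^{\J}(\la^i:i\in B)$ times a choice of where $b^N_j$ sits, extract its coefficient on both sides, and verify equality by reducing the left side to a sum $\sum_{\si\vee\tau=\{[r]\}}\mu(\pi,\si)$ and invoking \cref{prop:Weisner}. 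I would also double-check the $j=1$ normalization at the end, since the main proof of \cref{theo:CumulantsMacdonald} uses $\IE_1(\lambda) = b^N_1(\lambda)$, which must drop out of this formula when $r=1$ is allowed or when $\si = \{[r]\}$ and $j=1$.
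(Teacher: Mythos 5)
Your proposal follows essentially the same route as the paper: use the eigenfunction property to turn $\widetilde{D}$ on each product into multiplication by $\sum_{B\in\pi}\ev(\la^B)$, expand $\ev$ in powers of $(q-1)$, invert via the moment--cumulant relation \cref{EqMomCum}, and identify the resulting coefficient of $\prod_{B\in\si}\kumu_B(\uu)$ as $\IE_j(\la^B:B\in\si)$, with the constraint $\#\si\le j$ supplied by \cref{prop:CumulantOfPartitionsVanishes}. The only small caveat is that the key coefficient identity is not a Weisner-type vanishing but the signed count $\sum_{\rho\ni C}\mu(\rho,\{[\#\si]\})=(-1)^{\#\si-|C|}$, which the paper obtains by the direct Möbius/Stirling-number computation you mention as your fallback.
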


\begin{proof}
Note that strictly from the definition of interpolation Macdonald
polynomials given by \cref{PropDefMacdonald} we know that for any set partition $\pi \in \PPP([r])$ the following identity holds:
\begin{multline*}
\sum_{B \in \pi}\left(D \J^{(q^{-1},t^{-1})}_{\la^B}\ \cdot \prod_{B' \in
  \pi\setminus\{B\}}\J^{(q^{-1},t^{-1})}_{\la^{B'}}\right) = \left(\sum_{B \in \pi}\ev\left({\la^B}\right)\right)\prod_{B \in
  \pi}\J^{(q^{-1},t^{-1})}_{\la^{B'}}\\
= \sum_{j \geq 1}(q-1)^j\left(\sum_{B \in \pi}b^N_j\left(\la^{\pi_i}\right)\right)\prod_{B \in
  \pi}\J^{(q^{-1},t^{-1})}_{\la^{B'}}.
\end{multline*}
If we substitute it into the definition of
$\widetilde{D}\kumu^{\J}(\la^1,\dots,\la^r)$, we have that
\begin{align*}
\widetilde{D}\kumu^{\J}(\la^1,\dots,\la^r) &= \sum_{j \geq 1}(q-1)^j  \sum_{\pi \in \PPP([r])} \left( \mu(\pi,\{[r]\}) \left(\sum_{B \in \pi}b^N_j(\la^B) \right)
\prod_{B \in \pi}\J^{(q^{-1},t^{-1})}_{\la^B} \right)\\
&=\sum_{j \geq 1}(q-1)^j  \sum_{\pi \in \PPP([r])} \left( \mu(\pi,\{[r]\}) \left(\sum_{B \in \pi}b^N_j(\la^B) \right)
\prod_{B \in \pi}u_B \right).
\end{align*}

Thanks to \cref{EqMomCum}, we can replace each occurence of $u_B$ in the above
equation by $\sum_{\substack{\pi \in \PPP(B) } } 
 \prod_{B' \in \pi} \ka_{B'}(\uu)$ to obtain the
following identity
\begin{multline*}
\widetilde{D}\kumu_{[r]}(\uu) = \sum_{j \geq 1}(q-1)^j\\
\cdot\sum_{\si \in \PPP([r])}\left(\sum_{\pi \in \PPP([r]); \pi \geq \si} \mu(\pi,\{[r]\}) \left(\sum_{B \in \pi}b^N_j(\la^B)\right)\right)\prod_{B \in \si}\kumu_B(\uu).
\end{multline*}
Fix a set partition $\si \in \PPP([r])$. We claim that the expression
in the bracket in the above equation is given by the following formula
\begin{equation}
\label{eq:takietam}
\sum_{\pi \in \PPP([r]); \pi \geq \si} \mu(\pi,\{[r]\}) \left(\sum_{B \in \pi}b^N_j(\la^B)\right) = \IE_j\left(\la^B: B \in \si\right),
\end{equation}
which finishes the proof, since the right hand side of \cref{eq:takietam} vanishes for all set partitions $\si$ such that $\# (\si) > j$, which is ensured by \cref{prop:CumulantOfPartitionsVanishes}.

Let us order the blocks of $\si$ in some way $\si = \{B_1,\dots,B_{\#\si}\}$.
The partitions $\pi$ coarser than $\si$ are in bijection with partitions of the blocks of $\si$,
that is partitions of $[\#\si]$.
Therefore the left-hand side of \cref{eq:takietam} can be rewritten as:
\begin{multline*} 
\sum_{\pi \in \PPP([r]); \pi \geq \si} \mu(\pi,\{[r]\}) \left(\sum_{B \in \pi}b^N_j(\la^B)\right) =\\
\sum_{\rho \in \PPP([\#\si])} \mu(\rho,\{[\#\si]\}) \left(\sum_{C \in \rho}b^N_j\left( \bigoplus_{j \in C} \la^{B_j} \right)\right).
\end{multline*}
Fix some subset $C$ of $[\#\si]$. 
The coefficient of $b^N_j\left( \bigoplus_{j \in C} \la^{B_j} \right)$
in the above sum is equal to
\[a_C:=\sum_{\rho \in \PPP([\#\si]) \atop C \in \rho} \mu(\rho,\{[\#\si]\}). \]
The set partitions $\rho$ of $[\#\si]$ that have $C$ as a block write uniquely as $C \cup \rho'$,
where $\rho'$ is a set partition of $[\# (\si)]\setminus C)$. Thus
\begin{multline*} 
a_C = \sum_{\rho' \in \PPP([\# (\si)]\setminus C)} \mu(C \cup \rho', \{[\# (\si)]\}) \\
= \sum_{0 \leq i \leq \# (\si)-|C|} S(\#\si-|C|,i) i! (-1)^i = (-1)^{\#\si-|C|}, 
\end{multline*}
where $S(n,k)$ is the \emph{Stirling number of the second kind} 
and the last equality comes from the relation
\[ \sum_{0 \leq k \leq n}S(n,k)(x)_k = x^n\]
evaluated at $x=-1$ (here,
$(x)_k := x(x - 1)\cdots(x-k+1)$ denotes the falling factorial).
This finishes the proof of \cref{eq:takietam}, and also completes the proof of the lemma.
\end{proof}

\begin{lemma}
\label{lem:A2SimpleForm}
For any positive integer $r$ and any partitions $\la^1, \dots, \la^r$, the following equality holds true
\begin{multline}
\label{eq:A2Identity}
\widetilde{D}\ \kumu^{\J}(\la^1,\dots,\la^r) =
\widetilde{D}\ \kumu_{[r]}(\uu) \\
= \sum_j \frac{(q-1)^j}{j!}\sum_{1 \leq i \leq N}A_i(\bm{x};t)(x_i^j-x_i^{j-1})\sum_{\substack{\pi \in \PPP([r]), \\ \#\pi \leq j} }\sum_{\substack{\a \in \N_+^\pi, \\ |\a| = j} } \binom{j}{\a} \prod_{B \in \pi} \left(D_i^{\a(B)}\kumu_{B}(\uu)\right).
\end{multline}
\end{lemma}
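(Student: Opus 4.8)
The plan is to deduce \cref{lem:A2SimpleForm} directly from \cref{lem:zasada} by expanding the operator $D$ around $q=1$ via \cref{eq:DAsDifferential}. Throughout, we regard the interpolation polynomials $u_I=\J^{(q^{-1},t^{-1})}_{\la^I}$ and all the cumulants $\kumu_B(\uu)$ as elements of the field $K:=\QQ(q,t)(x_1,\dots,x_N)$, on which each $D_i:=\partial/\partial x_i$ is a genuine derivation. The first equality in \cref{eq:A2Identity}, namely $\widetilde{D}\,\kumu^{\J}(\la^1,\dots,\la^r)=\widetilde{D}\,\kumu_{[r]}(\uu)$, is just the definition of the cumulant of interpolation Macdonald polynomials with parameters $q^{-1},t^{-1}$, so the content of the lemma is the second equality.

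The first thing I would record is the elementary fact that the ``forced Leibniz'' operation $\widetilde{\,\cdot\,}$ is additive and scalar-homogeneous in the operator: if $D=\sum_\alpha c_\alpha\, g_\alpha\, E_\alpha$ with $c_\alpha$ scalars, $g_\alpha \in K$ (acting by multiplication), and $E_\alpha$ arbitrary linear operators, then, directly from $\widetilde{E}(f_1,\dots,f_r)=\sum_k f_1\cdots(Ef_k)\cdots f_r$ and commutativity of $K$,
\[
\widetilde{D}(f_1,\dots,f_r)=\sum_\alpha c_\alpha\, g_\alpha\,\widetilde{E_\alpha}(f_1,\dots,f_r),
\]
because each multiplication operator $g_\alpha$ pulls out of every summand $f_1\cdots(g_\alpha E_\alpha f_k)\cdots f_r$. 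Feeding \cref{eq:DAsDifferential} into this — its building blocks being the multiplication operator $A_i(\bm{x};t)(x_i^j-x_i^{j-1})$ composed with $D_i^j=(\partial/\partial x_i)^j$ — and then summing over set partitions $\pi$ with the weights $\mu(\pi,\{[r]\})$, I obtain
\[
\widetilde{D}\,\kumu_{[r]}(\uu)=\sum_{j\geq 1}\frac{(q-1)^j}{j!}\sum_{1\leq i\leq N}A_i(\bm{x};t)(x_i^j-x_i^{j-1})\,\widetilde{D_i^j}\,\kumu_{[r]}(\uu).
\]
It then remains only to rewrite each $\widetilde{D_i^j}\,\kumu_{[r]}(\uu)$, which is exactly \cref{lem:zasada} applied with the derivation $D_i$ of $K$ and with $k=j$ (that $\partial/\partial x_i$ does not preserve symmetry is irrelevant, since \cref{lem:zasada} holds for an arbitrary derivation of an arbitrary field). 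This gives
\[
\widetilde{D_i^j}\,\kumu_{[r]}(\uu)=\sum_{\substack{\pi\in\PPP([r]),\\ \#\pi\leq j}}\ \sum_{\substack{\a\in\N_+^\pi,\\ |\a|=j}}\binom{j}{\a}\prod_{B\in\pi}\bigl(D_i^{\a(B)}\kumu_B(\uu)\bigr),
\]
and substituting into the previous display produces precisely \cref{eq:A2Identity}.

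There is no genuine obstacle here: the statement is essentially a repackaging of \cref{lem:zasada}, and the work is bookkeeping. The two points to handle carefully are (a) that the ``coefficients'' $A_i(\bm{x};t)(x_i^j-x_i^{j-1})$ in \cref{eq:DAsDifferential} are multiplication operators, so they factor out of $\widetilde{\,\cdot\,}$, and (b) that each operator $D_i^j$ appearing there is literally the $j$-th power of the single derivation $\partial/\partial x_i$, so that \cref{lem:zasada} applies verbatim. It is worth noting — though this belongs to the main proof rather than to \cref{lem:A2SimpleForm} itself — that the $\#\pi=1$ contribution to the right-hand side of \cref{eq:A2Identity} equals $\sum_{j}\frac{(q-1)^j}{j!}\sum_i A_i(\bm{x};t)(x_i^j-x_i^{j-1})D_i^j\kumu_{[r]}(\uu)$ (since then $\binom{j}{\a}=1$ and $\a([r])=j$), which is $D\,\kumu_{[r]}(\uu)$ itself; this is what makes the difference $D\,\kumu_{[r]}(\uu)-\widetilde{D}\,\kumu_{[r]}(\uu)$ collapse to the terms with $\#\pi\geq 2$ as used in \cref{eq:2}.
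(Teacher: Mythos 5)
Your proof is correct and follows essentially the same route as the paper: expand $D$ around $q=1$ via \cref{eq:DAsDifferential}, pull the multiplication operators $A_i(\bm{x};t)(x_i^j-x_i^{j-1})$ out of the forced-Leibniz operation, and apply \cref{lem:zasada} to the derivation $\partial/\partial x_i$ with $k=j$. Your framing in the rational-function field and the explicit remark that multiplication operators factor out of $\widetilde{\,\cdot\,}$ make the bookkeeping slightly more careful than the paper's, but the argument is the same.
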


\begin{proof}
We recall that
\[D = \sum_{j \geq 1}\frac{(q-1)^{j}}{j!}\sum_iA_i(\bm{x};t)(x_i^j-x_i^{j-1}) D^j_i\]
(which is given by \cref{eq:DAsDifferential}),
where $D^j_i := \frac{\partial^j}{\partial x_i^j}$. Since
$\frac{\partial}{\partial x_i} \in \Der_{\Symm_N}$ is a derivation, we
have the following formula
\[ \widetilde{D}\ \kumu_{[r]}(\uu) = \sum_{j \geq 1}\frac{(q-1)^{j}}{j!}\sum_i
A_i(\bm{x};t)(x_i^j-x_i^{j-1}) \left(\widetilde{D^j_i}\ \kumu_{[r]}(\uu) \right),
\]
and one can apply \cref{lem:zasada} and substitute the following identity
\[ \widetilde{D^j_i}\ \kumu_{[r]}(\uu) = \sum_{\substack{\pi \in \PPP([r]), \\ \#\pi \leq j} }\sum_{\substack{\a \in \N_+^\pi, \\ |\a| = j} } \binom{j}{\a} \prod_{B \in \pi} \left(D_i^{\a(B)}\kumu_{B}(\uu)\right),\] 
which immediately gives \cref{eq:A2Identity}.

\end{proof}

\section*{Acknowledgments}

I thank Valentin F\'eray for encouraging discussions, and Piotr Śniady
for many valuable remarks.

\bibliographystyle{alpha}

\bibliography{biblio2015}

\end{document}